\newtheorem{theorem}{Theorem}[section]
\newtheorem{corollary}[theorem]{Corollary}
\theoremstyle{definition}
\newtheorem{example}[theorem]{Example}
\newtheorem{remark}[theorem]{Remark}
\renewcommand{\geq}{\geqslant}
\renewcommand{\leq}{\leqslant}
\def \md#1{{\,({\rm mod}\ #1)}}
\title{
Globally simple Heffter arrays $H(n;k)$ when $k\equiv 0,3\md{4}$
}
\author{
Kevin Burrage\thanks{Department of Computer Science, University of Oxford, UK; ARC Centre of Excellence for Mathematical and Statistical Frontiers, Queensland University of Technology (QUT), Australia (\texttt{kevin.burrage@qut.edu.au})}\and
Nicholas J. Cavenagh\thanks{Department of Mathematics, The University of Waikato, Private Bag 3105, Hamilton 3240, New Zealand.
\texttt{nickc@waikato.ac.nz}}\and
Diane M. Donovan\thanks{School of Mathematics and Physics, The University of Queensland,
 Queensland 4072,
Australia. (\texttt{dmd@maths.uq.edu.au})}\and
Emine \c{S}. Yaz\i c\i \thanks{Department of Mathematics, Ko\c{c} University, Sar{\i}yer,
34450, \.{I}stanbul, Turkey (\texttt{eyazici@ku.edu.tr})}
}
\begin{document}

\maketitle

\begin{abstract}
Square Heffter arrays are $n\times n$ arrays such that each row  and each column contains $k$ filled cells,
each row and column sum is divisible by $2nk+1$ and either $x$ or $-x$ appears in the array for each integer
$1\leq x\leq nk$.

Archdeacon noted that a Heffter array,  satisfying two additional conditions, yields a face $2$-colourable  embedding of the complete graph $K_{2nk+1}$ on an orientable surface, where for each colour, the faces give a $k$-cycle system.  Moreover, a cyclic permutation on the vertices acts as an automorphism of the embedding.
These necessary conditions pertain to cyclic orderings of the entries in  each  row and each column of the Heffter array and are: (1) for each row and each column the sequential partial sums determined by the cyclic ordering must be distinct modulo $2nk+1$; (2) the
composition of the cyclic orderings of the rows and columns is equivalent to a single cycle permutation on the entries in the array.
We construct Heffter arrays that satisfy condition (1) whenever (a) $k$ is divisible by $4$; or (b) $n\equiv 1\md{4}$ and $k\equiv 3\md{4}$; or (c)
$n\equiv 0\md{4}$, $k\equiv 3\md{4}$ and $n\gg k$.
As corollaries to the above we obtain pairs of orthogonal $k$-cycle decompositions of $K_{2nk+1}$.
\end{abstract}

\textbf{Keywords and MSC Code:} Heffter difference problem; Heffter  arrays;  orthogonal cycle decompositions. 05B30

\section{Introduction}
In 1896 Heffter, \cite{H}, introduced his now famous first difference problem:
 partition the set $\{1,\dots,3m\}$ into $m$ triples $\{a,b,c\}$ such that either $a+b=c$ or $a+b+c$ is divisible by $6m+1$. However, it was not until 1939 that Peltesohn, \cite{Pe}, showed that a solution exists whenever $m\neq 3$. A key interest in this problem is that solutions to Heffter's first difference problem yield cyclic Steiner triple systems; see \cite{CD}. A natural extension to this question is: can we identify a set of $m$ subsets $\{x_1,\dots,x_s\}\subset \{-ms,\dots, -1,1,\dots,ms\}$ such that the sum of the entries in each subset is divisible by $2ms+1$ and further if $x$ occurs in one of the subsets, $-x$ does not occur in any of the subsets? We call the set of $m$ such subsets a {\em Heffter system}. Two Heffter systems, $H_1=\{H_{11},\dots, H_{1m}\}$, $|H_{1i}|=s$ for $i=1,\dots m$, and  $H_2=\{H_{21},\dots, H_{2n}\}$ $|H_{2j}|=r$ for $j=1,\dots n$,  where $sm=nt$, are said to be {\em orthogonal} if for all $i,j$, $|H_{1i}\cap H_{2j}|\leq 1$. As observed by Dinitz and Mattern \cite{DM},  a Heffter system is equivalent to a
 {\em Heffter array} $H(m,n;s,t)$, which is an $m\times n$ array of integers, such that:
\begin{itemize}
\item each row contains $s$ filled cells and each column contains $t$ filled cells;
\item the elements in every row and column sum to $0$ in ${\mathbb Z}_{2ms+1}$; and
\item for each integer $1\leq x\leq ms$, either $x$ or $-x$ appears in the array.
\end{itemize}
Henceforth the set of integers $\{0,1,\dots ,n-1\}$ is denoted by $[n]$.
In the current paper the rows and columns of an $m\times n$ array will be indexed by $[m]$ and $[n]$, respectively.
A Heffter array is {\em square}, if $m = n$ and necessarily $s = t$, and is denoted $H(n;t)$.

The following is an example of a pair of orthogonal  Heffter systems that are equivalent to a Heffter array $H(6,12;8,4)$, given by Archdeacon in \cite{A}.

\begin{example}
Let $m=6$ and $s=8$. Then for each set
\begin{eqnarray*}
\begin{array}{ll}
\{-1 ,2  ,5  ,-6 , -25,26 ,29 ,-30\},&
\{3  ,-4 ,-7 ,8  , 27 ,-28,-31, 32\},\\
\{  9,-10,  -13,14 ,33 ,-34,   -37,38\},&
\{-11, 12,   15 ,-16,-35,36 ,  39 ,-40\},\\
\{-17, 18,21 ,-22,  -41,42 ,45 ,-46\}\ \mbox{\rm and} &
\{ 19,-20,-23,24 , 43 ,-44,-47,48 \},
\end{array}
\end{eqnarray*}
 its elements sum to zero. Also for each $x\in \{1,\dots ,48\}$,  precisely one of $x$ or $-x$ occurs in a subset. Thus these 6 subsets form a Heffter system.

Let $n=12$ and $t=4$. Then again for each of the following sets
\begin{eqnarray*}
\begin{array}{lll}
\{ 9,-11,-17,19\},&\{-10,12,18,-20 \},&\{ -1,3,21,-23  \},\\
\{2,-4,-22,24 \},&\{ 5,-7,-13,15\},&\{ -6,8,14,-16 \},\\
 \{33,-35,-41,43\},& \{-34,36,42,-44\},& \{-25,27,45,-47\},\\
  \{26,-28,-46,48\},&
  \{29,-31,-37,39\}\ \mbox{\rm and} & \{-30,32,38,-40\},
  \end{array}
  \end{eqnarray*}
 its elements sum to zero. Also for each $x\in \{1,\dots ,48\}$,  precisely one of $x$ or $-x$ occurs in a subset. Thus these 12 subsets form a Heffter system.

These two systems are orthogonal and thus we have equivalence with the following Heffter array $H(6,12;8,4)$.

\begin{scriptsize}
\begin{center}
\begin{tabular}{|c|c|c|c|c|c|c|c|c|c|c|c|}
\hline
    &   &-1 &2  &5  &-6 &   &   &-25&26 &29 &-30\\
\hline
    &   &3  &-4 &-7 &8  &   &   &27 &-28&-31& 32\\
\hline
  9 &-10&   &   &-13&14 &33 &-34&   &   &-37&38\\
\hline
-11 & 12&   &   &15 &-16&-35&36 &   &   &39 &-40\\
\hline
-17 & 18&21 &-22&   &   &-41&42 &45 &-46&   &\\
\hline
 19 &-20&-23&24 &   &   &43 &-44&-47&48 &   &\\
\hline

\end{tabular}
\end{center}
\end{scriptsize}
\end{example}

A cycle decomposition of a complete graph is the edge-disjoint decomposition of its edges into fixed length cycles. It was  Archdeacon \cite{A} who first showed that a Heffter array, together with a certain ordering of its elements, yields a biembedding of a pair of cycle decompositions of the complete graph onto an orientable surface. Since then a number of papers have appeared on the connection between  Heffter arrays and the biembedding of cycle decompositions as well as a number of papers studying more general biembeddings of the complete graph. See for examples
the papers \cite{A,BDF,CDP,CMPP,DM,GG,GrM,GM,M,V} and  \cite{ItalE}.

We next describe the above orderings.
Given a row $r$ of a Heffter array $H(m,n;s,t)$, if there exists a cyclic ordering $\phi_r=(a_0,a_1,\dots ,a_{s-1})$ of the entries of row $r$ such that, for $i=0,\dots, s-1$,
the partial sums
$$\alpha_i=\displaystyle\sum_{j=0}^{i} a_j \md{2ms+1}$$
 are all distinct, we say that $\phi_r$ is {\em simple}. A simple ordering of the entries of a column may be defined similarly.
If every row and column of a Heffter array $H(m,n;s,t)$ has a simple ordering, we say that the array is {\em simple}.
The existence of a simple $H(m,n;s,t)$ implies the existence of {\em orthogonal} decompositions ${\mathcal C}$ and ${\mathcal C}'$ of the graph $K_{2ms+1}$ into $s$-cycles and $t$-cycles (respectively);  that is, any cycle from ${\mathcal C}$ shares at  most one edge with any cycle from ${\mathcal C}'$ \cite{A}. Orthogonal cycle systems of the complete graph are studied in \cite{AHL}, \cite{CY} and \cite{CY2}. Observe that, if $s,t\leq5$ then any $H(m,n;s,t)$ is simple.

The composition of the cycles $\phi_r$,  for  $r\in [m]$,  is a permutation, denoted here  $\omega_r$, on the entries of the Heffter array. Similarly we may define the permutation
$\omega_c$ as the composition of the cycles $\phi_c$, for  $c\in [n]$.
If, the permutation $\omega_r\circ\omega_c$ can be written as a single cycle of length $ms=nt$, we say that
$\omega_r$ and $\omega_c$ are  {\em compatible} orderings for the Heffter array.

Archdeacon \cite{A}  proved the following theorem, showing that a Heffter array with a pair of compatible and simple orderings can be used to construct an embedding of the complete graph
$K_{2ms+1}$ on a surface.

\begin{theorem}\label{Archdeacon} {\rm \cite{A}} Suppose there exists a Heffter array $H(m, n; s, t)$ with orderings $\omega_r$ of the
symbols in the rows of the array and $\omega_c$ on the symbols in the columns of the array, where $\omega_r$ and $\omega_c$ are both simple and compatible.
Then there exists a face $2$-colourable embedding of $K_{2ms+1}$ on an orientable surface such that
the faces of one colour are cycles of length $s$ and the faces of the other colour are cycles of length $t$.
Moreover, in such an embedding the vertices may be labelled with the elements of $\mathbb{Z}_{2ms+1}$ such that the permutation $x \rightarrow  x + 1$ preserves the faces of
each colour.
\end{theorem}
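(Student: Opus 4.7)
The plan is to construct the embedding combinatorially via a rotation system at each vertex, and then verify that the resulting faces are precisely the $s$-cycles arising from the rows and the $t$-cycles arising from the columns. The vertex set is $\mathbb{Z}_{2ms+1}$, and since the target embedding should admit the cyclic action $x\mapsto x+1$, it suffices to specify a rotation at the vertex $0$ and extend by translation to every other vertex.

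First I would use each row ordering $\phi_r=(a_0^r,\dots,a_{s-1}^r)$ and its partial sums $\alpha_i^r$ to produce, for every $v\in\mathbb{Z}_{2ms+1}$, the closed walk $C_r^v=(v,\,v+\alpha_0^r,\,v+\alpha_1^r,\,\dots,\,v+\alpha_{s-2}^r)$ in $K_{2ms+1}$. Simplicity of $\phi_r$ forces the $s$ partial sums to be pairwise distinct modulo $2ms+1$, so $C_r^v$ is a genuine $s$-cycle, and the row-sum-zero condition ensures that it closes up. Because each of $1,2,\dots,ms$ appears up to sign in the array exactly once, the family $\{C_r^v:r\in[m],\,v\in\mathbb{Z}_{2ms+1}\}$ covers every edge of $K_{2ms+1}$ exactly once, giving a cyclic $s$-cycle decomposition $\mathcal{C}$. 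The analogous construction on columns produces a cyclic $t$-cycle decomposition $\mathcal{C}'$.

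I would then biembed $\mathcal{C}$ and $\mathcal{C}'$ by declaring the row cycle through each edge to be the face on one side of the edge and the column cycle through it to be the face on the other. Equivalently, at vertex $0$ the rotation is chosen so that for each entry $e=a_i^r$ the ``row corner'' pairs the edges $\{0,-a_{i-1}^r\}$ and $\{0,a_i^r\}$, while the ``column corner'' pairs the analogous pair prescribed by $\phi_c$; propagating by $x\mapsto x+1$ defines the rotation at every vertex and automatically makes the cyclic translation an automorphism of the map.

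The key step is to verify that this combinatorial map is an embedding on an orientable surface, i.e.\ that the link of every vertex is a single cycle. At vertex $0$ each of the $2ms$ incident edges lies in exactly two corners, one row corner and one column corner, so the link is $2$-regular and hence a disjoint union of cycles. After identifying the $2ms$ edges at $0$ with the two signed copies of each of the $ms$ array entries, traversing the link by alternately following a row corner and a column corner corresponds to iterating a permutation on the $ms$ entries whose cycle structure matches that of $\omega_r\circ\omega_c$. The compatibility hypothesis says precisely that $\omega_r\circ\omega_c$ is a single $ms$-cycle, which forces the link at $0$---and, by the cyclic symmetry, the link at every vertex---to be a single $2ms$-cycle. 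Since the rotation at every vertex is the translate of the rotation at $0$, the orientation is globally consistent, so the map embeds on an orientable surface with $\mathcal{C}$ and $\mathcal{C}'$ as its two colour classes of faces.
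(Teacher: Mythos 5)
The paper does not prove this theorem: it is quoted verbatim from Archdeacon's paper \cite{A}, so there is no internal proof to compare against. Your reconstruction follows Archdeacon's original argument (build the two cyclic cycle decompositions from the partial sums, prescribe the rotation at vertex $0$ by interleaving row and column corners, extend by translation, and use compatibility to force the link at each vertex to be a single cycle), and the architecture is correct: simplicity is used exactly where it should be (to make each closed walk a genuine cycle), the row/column sum condition closes the walks, the support condition gives exact edge coverage, and translation-invariance of the rotation gives the $\mathbb{Z}_{2ms+1}$-action on faces.

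The one step you should not wave through is the identification of the link permutation with $\omega_r\circ\omega_c$. With the row corner as you define it (pairing $\{0,-a_{i-1}\}$ with $\{0,a_i\}$) and the column corner defined ``analogously'' in the same orientation, the two-step map obtained by alternating corners sends the positive copy of an entry to the column-successor of its row-predecessor, i.e.\ it is (a conjugate of) $\omega_c\circ\omega_r^{-1}$, not $\omega_r\circ\omega_c$; these can have genuinely different cycle structures (e.g.\ if $\omega_r=\omega_c$ is a single $N$-cycle, the first is the identity while the second is $\sigma^2$). The fix is forced by orientability itself: in a face $2$-coloured oriented embedding each edge must be traversed once in each direction by its two incident faces, so one of the two families of faces must be read against its ordering when the corners are assembled, and with that reversal the link permutation does have the cycle structure of $\omega_r\circ\omega_c$ as the compatibility hypothesis requires. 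Stating this orientation convention explicitly closes the only real gap in the write-up.
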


If we relax the condition of simplicity in the above theorem, we still have a biembedding on an orientable surface but the faces collapse into smaller ones (and the cycles become circuits).
On the other hand if we relax only the condition of compatibility, we have an embedding onto a pseudosurface rather than surface, but ${\mathcal C}$ and ${\mathcal C}'$ remain orthogonal.

To date, the existence of Heffter arrays with orderings that are {\em both} compatible and simple is known in only a few specific cases:
 $H(3,n;n,3)$ \cite{DM};  $H(5,n;n,5)$ and $n\leq 100$ \cite{DM};
$H(n;t)$, $nt\equiv 3$ \md{4} and $t\in \{3,5,7,9\}$ \cite{ADDY, DW, CMPP}.

Ignoring orderings, in \cite{ABD} it was shown that
a $H(m,n;n,m)$ exists for all possible values of $m$ and $n$.
The spectrum for square Heffter arrays has been completely determined in \cite{ADDY}, \cite{DW} and \cite{CDDY}.

\begin{theorem}
There exists an $H(n; k)$ if and only if $3 \leq k \leq  n$.
\label{mainthm}
\end{theorem}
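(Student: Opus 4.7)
The necessity of $k\leq n$ is immediate since a row has at most $n$ cells. For necessity of $k\geq 3$: when $k=1$ each row's unique entry is required to be divisible by $2n+1$ yet lies in $\{\pm 1,\ldots,\pm n\}$; when $k=2$ the two entries $a,b$ of any row must satisfy $a+b\equiv 0\pmod{4n+1}$ with $1\leq |a|,|b|\leq 2n$, which forces $b=-a$ and contradicts the condition that at most one of $x,-x$ appears in the array.

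For sufficiency I would proceed by case analysis on the residues of $n$ and $k$ modulo $4$, following the three papers cited just before the theorem statement. In each case I would fix the support of the array to be $k$ consecutive diagonals: cell $(i,j)$ is filled if and only if $(j-i)\bmod n\in\{0,1,\ldots,k-1\}$. This cyclically symmetric shape automatically provides exactly $k$ filled cells in every row and every column, so the remaining combinatorial task is to fill these $nk$ cells with a signed copy of $\{\pm 1,\ldots,\pm nk\}$ whose row and column sums vanish modulo $2nk+1$. I would aim for the stronger \emph{integer} version where the sums vanish in $\mathbb{Z}$, since this implies the congruence automatically. The bulk of the construction is then a block decomposition: partition $\{1,\ldots,nk\}$ into small blocks that sum to zero after sign choice, for instance pairs $\{a,-a\}$ placed diagonally when $k$ is even, or short zero-sum triples/pentuples that handle the parity when $k$ is odd, and distribute the blocks so that each column inherits a zero-sum decomposition from the cyclic shift of the row pattern.

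The main obstacle is the tight regime $k=n$ (and a step removed, $k$ close to $n$), where no empty cells exist to absorb corrections and both row and column constraints must hold simultaneously with essentially no slack. Here the block-and-shift method loses flexibility, and a separate algebraic construction is required, typically via a carefully chosen starter or Skolem/Langford-type sequence in $\mathbb{Z}_{2nk+1}$ whose cyclic development produces the array. A secondary obstacle is the parity interplay when $nk$ is odd with $k\equiv 3\pmod{4}$, since a neat pairing of values into $\{a,-a\}$ blocks within rows fails; one must then build zero-sum triples by hand and interleave them with the pair blocks, verifying that the column sums also close. These two issues together are precisely why the spectrum was completed across several papers rather than by a single uniform argument, and I would expect my own proof to inherit the same structural division.
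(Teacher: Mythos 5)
The paper offers no internal proof of this theorem: it is quoted as the culmination of the spectrum results in \cite{ADDY}, \cite{DW} and \cite{CDDY}, so there is no argument in the text to compare yours against. Your necessity direction is correct and complete: $k\leq n$ is forced by the row length; $k=1$ fails because a single entry of absolute value at most $n$ cannot be divisible by $2n+1$; and $k=2$ fails because $|a+b|\leq 4n<4n+1$ forces $a+b=0$, hence $b=-a$, violating the support condition.

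The sufficiency half, however, is a plan rather than a proof, and one of its load-bearing choices is unworkable. You propose to aim in all cases for the \emph{integer} version, where row and column sums vanish in $\mathbb{Z}$. But an integer Heffter array $H(n;k)$ exists only when $nk\equiv 0,3\pmod{4}$ (as the paper's conclusion records): if every row sums to $0$ in $\mathbb{Z}$, then some signing of $\{1,\dots,nk\}$ sums to $0$, which forces $nk(nk+1)/2$ to be even. Hence for $nk\equiv 1,2\pmod{4}$ your blocks-of-pairs-and-triples scheme cannot close up over the integers, and the construction must genuinely exploit reduction modulo $2nk+1$; this is precisely the content of \cite{CDDY} and is a different kind of construction, not the ``secondary parity obstacle'' within the integer framework that you describe. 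Beyond that, the remaining steps --- the explicit block decompositions, the tight case $k=n$, and the verification that the cyclically shifted row blocks also yield zero column sums --- are named but not carried out, so as written the sufficiency direction establishes nothing beyond what the citations already supply.
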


For the sake of ease of description, Heffter arrays often possess some extra properties that we now describe.
A Heffter array is called an {\em integer} Heffter array if the sum of each
row and column is $0$ in ${\mathbb Z}$.
Suppose that a simple cyclic ordering  $\phi_r=(a_1,a_2,\dots ,a_s)$ of a row $r$ of a Heffter array has the property that whenever entry $a_i$ lies in cell $(r,c)$ and entry $a_{i+1}$ lies in cell
$(r,c')$, then $c<c'$. That is, the ordering for the row $r$ is taken from left to right across the array. We say that $\phi_r$ is the {\em natural} ordering for the rows and define a natural column ordering in a similar way from top to bottom.
If the natural ordering for every row and column is also a simple ordering, we say that the Heffter array is {\em globally simple}.

We focus on square Heffter arrays in this paper and now can state our main results.
\begin{theorem}
If $p>0$ and $n\geq 4p$ then there exists a globally simple integer Heffter array $H(n;4p)$.
\label{main1}
\end{theorem}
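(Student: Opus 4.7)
The plan is to give an explicit construction of an $n \times n$ array with a cyclic-diagonal support, fill the cells by a closed-form rule so that the row and column sums vanish in $\mathbb{Z}$ and each of $\pm 1, \ldots, \pm 4np$ appears exactly once, and finally verify that the natural orderings of the rows and columns are simple. The hypothesis $n \geq 4p$ is used precisely to make the diagonal band of width $k = 4p$ well-defined on an $n \times n$ array.

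First, I specify the support: the $4p$ filled cells of row $i$ are placed at columns $i, i+1, \ldots, i + 4p - 1 \pmod{n}$, so that each row has $4p$ filled cells in consecutive (cyclic) positions and consequently each column has exactly $4p$ filled cells (at rows $j - 4p + 1, \ldots, j \pmod{n}$).

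Second, I fill the cells, exploiting that $k = 4p$ is divisible by four. I partition each row's natural-order list of $4p$ entries into $p$ consecutive quadruples and, for the $q$-th quadruple of row $i$, choose four magnitudes from four disjoint strata of $\{1, 2, \ldots, 4np\}$ together with a sign pattern so that the quadruple sums to zero in $\mathbb{Z}$. Summing over $q$ then makes each row sum zero. An appropriate bijective encoding of $(i, q, s)$, where $s \in \{0,1,2,3\}$ indexes the stratum within a quadruple, guarantees that every value in $\{1, \ldots, 4np\}$ is used exactly once with its prescribed sign. The column sums are then forced to zero by calibrating the shifts between successive rows, which is the chief constraint guiding the precise form of the entry formula.

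Third, I verify global simplicity. For each row, the natural-order partial sums $\sigma_j = \sum_{\ell < j} e_{i,\ell}$ return to $0$ after every fourth step, and within each quadruple the three intermediate partial sums lie in an integer interval determined by that quadruple's strata. The disjointness of strata across quadruples then forces the $4p$ partial sums to be distinct integers of absolute value at most $4np$, hence distinct modulo $2nk + 1 = 8np+1$. A parallel argument handles each column, though one must track how the per-row cyclic shift by one permutes the quadruple and stratum labels as one moves down a column.

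The main obstacle is the coupling between the zero-column-sum condition and the distinctness of the column partial sums in natural order. Whereas the row conditions are controlled by the local quadruple structure, the column conditions probe the behaviour of the construction under vertical descent, where the shift by one mixes quadruples of different rows; the formula in Step 2 must be arranged so that simultaneously (i) the vertical cumulative sums telescope to zero and (ii) the top-to-bottom partial sums stay confined in a range that prevents collisions modulo $8np+1$. The boundary rows with $i + 4p - 1 \geq n$, whose natural row order wraps cyclically and reorders the quadruples, need a separate verification and are where the bulk of the case analysis is expected to lie.
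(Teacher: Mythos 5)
Your plan has a structural flaw that cannot be repaired without abandoning its central design choice. You propose to partition each row's $4p$ entries (in natural order) into $p$ consecutive quadruples, each summing to zero in $\mathbb{Z}$. But global simplicity requires that \emph{all} $k=4p$ cumulative partial sums $\alpha_0,\alpha_1,\dots,\alpha_{k-1}$ of the natural ordering be distinct modulo $2nk+1$, and under your design the partial sums taken at the end of each quadruple satisfy $\alpha_3=\alpha_7=\dots=\alpha_{4p-1}=0$. For $p\geq 2$ these are $p$ coincident values, so the row ordering is not simple; your claim in Step 3 that "the disjointness of strata across quadruples then forces the $4p$ partial sums to be distinct" contradicts your own Step 2. (The same objection applies to the columns.) Beyond this, the proposal never exhibits the entry formula on which everything depends, so the support, column-sum, and boundary-wrapping claims are all unverified assertions rather than steps of a proof.

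The paper's construction shows how to avoid exactly this trap: entries are placed on $4p$ consecutive diagonals so that each diagonal carries one full congruence class modulo $k$ (these are the analogue of your ``strata''), and consecutive \emph{pairs} of diagonals are arranged to sum to the nonzero constant $-1$ in each row (groups of four summing to $-2$ in each column). The checkpoint partial sums then form the strictly monotone sequence $-1,-2,-3,\dots$ rather than returning to $0$, while the intermediate partial sums are separated by comparing residues modulo $k$; finally the bound $|\Sigma(\alpha)|\leq nk$ together with observation (\ref{mods}) upgrades distinctness as integers to distinctness modulo $2nk+1$. If you want to salvage your approach, the minimal fix is to make each block of entries sum to a fixed nonzero constant (so the block-boundary partial sums drift monotonically) and to prove distinctness within blocks by a modular invariant attached to position rather than by interval containment alone.
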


We prove Theorem \ref{main1} in Section 2.

\begin{corollary}
If $p>0$ and $n\geq 4p$, there exists a pair of orthogonal decompositions of $K_{8np+1}$ into cycles of length $4p$.
\end{corollary}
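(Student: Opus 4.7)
The plan is to derive the corollary as an immediate consequence of Theorem \ref{main1} combined with the cycle-decomposition consequence of simplicity noted earlier in the introduction. First, I would apply Theorem \ref{main1} to the given pair $(n,p)$ with $n\geq 4p$ to produce a globally simple integer Heffter array $H(n;4p)$. Next, I would observe that ``globally simple'' is strictly stronger than ``simple'': by definition, in a globally simple Heffter array, the natural left-to-right ordering of each row and the natural top-to-bottom ordering of each column already have distinct partial sums modulo $2n(4p)+1=8np+1$, so these natural orderings witness the simplicity of the array in the sense required by the excerpt.

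Having a simple $H(n;4p)$ in hand, I would invoke the result of Archdeacon recalled in the paragraph following the definition of simplicity, namely that the existence of a simple $H(m,n;s,t)$ implies the existence of orthogonal decompositions ${\mathcal C}$ and ${\mathcal C}'$ of $K_{2ms+1}$ into $s$-cycles and $t$-cycles respectively. Specialising to $m=n$ and $s=t=4p$, the underlying complete graph is $K_{2n(4p)+1}=K_{8np+1}$, and the two cycle decompositions ${\mathcal C}$ and ${\mathcal C}'$ obtained from the row-orderings and column-orderings respectively are both decompositions of $K_{8np+1}$ into cycles of length $4p$, and they are orthogonal. This yields exactly the pair of orthogonal $4p$-cycle decompositions claimed.

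There is no real obstacle here once Theorem \ref{main1} is available: all the work is in establishing the global simplicity of the $H(n;4p)$, which is precisely the content of Theorem \ref{main1} proved in Section~2. The only thing worth emphasising in writing the corollary out is the short verification that the natural orderings supplied by global simplicity satisfy the hypothesis ``simple'' required by Archdeacon's orthogonality result; compatibility is \emph{not} needed for orthogonality (it is only needed for the embedding statement of Theorem \ref{Archdeacon}).
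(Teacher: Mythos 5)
Your proposal is correct and matches the paper's (implicit) argument exactly: the corollary is stated as an immediate consequence of Theorem \ref{main1} together with Archdeacon's result, quoted in the introduction, that a simple $H(m,n;s,t)$ yields orthogonal $s$- and $t$-cycle decompositions of $K_{2ms+1}$, specialised to $m=n$, $s=t=4p$ so that $2ms+1=8np+1$. Your remarks that global simplicity trivially implies simplicity and that compatibility is not needed for orthogonality are both accurate and consistent with the paper's discussion.
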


\begin{theorem}
Let $n\equiv 1 \md{4}$, $p>0$ and $n\geq 4p+3$, then there exists a globally simple integer Heffter array $H(n;4p+3)$.
\label{main2}
\end{theorem}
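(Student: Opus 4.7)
The plan is to build the array by overlaying two globally simple integer pieces: a small $H(n;3)$ on the values $\pm\{1,\ldots,3n\}$, and a suitably relabelled $H(n;4p)$ on the values $\pm\{3n+1,\ldots,n(4p+3)\}$, arranged so their supports are disjoint. This parallels the decomposition $4p+3 = 3 + 4p$ and leverages Theorem~\ref{main1} for the bulk of the construction, while treating the ``remainder'' of three cells per line via a known base case.

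First, I would invoke Theorem~\ref{main1} to obtain a globally simple integer $H(n;4p)$ array $B$ on the symbols $\pm\{1,2,\ldots,4np\}$ (this requires only $n \geq 4p$, which is weaker than our hypothesis $n \geq 4p+3$). I would then relabel each entry $b$ of $B$ by $b + 3n\cdot\operatorname{sgn}(b)$ to produce an array $B'$ whose symbols lie in $\pm\{3n+1,\ldots,n(4p+3)\}$. Independently, I would fix a globally simple integer $H(n;3)$ array $A$ on symbols $\pm\{1,\ldots,3n\}$, which is standard for $n\equiv 1\md{4}$ (see \cite{ADDY}) and can be taken with a ``thin diagonal band'' support, e.g.\ filled cells $(i,i),(i,i+1),(i,i+2)$ with indices mod $n$. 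The support of $B'$ inherited from the Theorem~\ref{main1} construction can be cyclically shifted so as to be disjoint from the support of $A$, while each row and each column of the overlay $H := A \cup B'$ has exactly $4p+3$ filled cells with the three $A$-cells occupying the leftmost (topmost) filled positions in each row (column).

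The integer Heffter conditions for $H$ are immediate: row and column sums of $H$ equal those of $A$ plus those of $B'$, both of which vanish in $\mathbb{Z}$; and by disjointness and the relabelling, each $x \in \{1,\ldots,n(4p+3)\}$ occurs exactly once up to sign. The main work is to verify global simplicity modulo $N := 2n(4p+3)+1 = 8np + 6n + 1$. Under the natural ordering, the partial sums in a row of $H$ read
\[
\alpha_1,\ \alpha_2,\ 0,\ \beta_1,\ \beta_2,\ \ldots,\ \beta_{4p-1},
\]
where $\alpha_i$ are the partial sums of the corresponding $A$-row and $\beta_j$ of the $B'$-row. Since $|\alpha_i|\leq 3n < N/2$, the three $\alpha$-values are distinct modulo $N$. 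Writing $\beta_j = \gamma_j + 3n\cdot(j_+ - j_-)$, where $\gamma_j$ is the $j$-th partial sum of the \emph{unshifted} $B$-row and $j_+,\,j_-$ count positive/negative entries among the first $j$, one checks that distinctness of the $\gamma_j$ as integers (inherited from global simplicity of $B$ from Theorem~\ref{main1}) and the bounded correction $3n|j_+-j_-|\le 3n\cdot 4p$ keep the $\beta_j$ inside a window of width less than $N$, so distinctness as integers upgrades to distinctness modulo $N$. Finally, $\alpha_i \not\equiv \beta_j \md{N}$ follows from a short case analysis using that the $\beta_j$ all involve at least one summand of magnitude $\geq 3n+1$, whereas $|\alpha_i|\leq 3n$. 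The same argument applies to the column partial sums by symmetry of the construction.

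The main obstacle will be Step~5, the global simplicity check. The subtlety is that simplicity of $A$ holds modulo $6n+1$ and simplicity of $B$ modulo $8np+1$, while we need simplicity modulo $N = 8np+6n+1$; none of these moduli divides another, so one cannot quote the sub-array simplicity directly but must instead argue through integer magnitudes and the uniform shift by $3n$. The hypotheses enter precisely here: $n \equiv 1 \md{4}$ supplies the base $H(n;3)$, and $n \geq 4p+3$ leaves enough room for the $3n$ shift to avoid collisions with the $A$-values and for the $\beta_j$-window to remain strictly smaller than $N$. Once the partial-sum bookkeeping is carried out (and this is where the bulk of the casework lives), the conclusion follows.
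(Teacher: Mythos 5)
Your high-level decomposition is the same as the paper's: write $4p+3=4p+3$, put an $H(n;3)$ on symbols $\pm\{1,\dots,3n\}$ and a ``shifted'' $H(n;4p)$ on symbols $\pm\{3n+1,\dots,(4p+3)n\}$ on disjoint diagonals, and combine. But the central step of your argument has a genuine gap. You claim that global simplicity of the shifted array $B'$ is inherited from Theorem \ref{main1} because $\beta_j=\gamma_j+3n(j_+-j_-)$ and the $\gamma_j$ are distinct integers. Distinctness is \emph{not} preserved here: the correction term $3n(j_+-j_-)$ varies with $j$, so two distinct $\gamma_{j_1}\neq\gamma_{j_2}$ can easily satisfy $\gamma_{j_1}+3nc_{j_1}=\gamma_{j_2}+3nc_{j_2}$. (Keeping all $\beta_j$ in a window of width $<N$ only upgrades integer distinctness to distinctness mod $N$; it does not supply the integer distinctness in the first place.) This is precisely why the paper does not reuse the Section 2 array: it builds a new ``support shifted Heffter array'' $H(n;4p,\gamma)$ from scratch (Theorem \ref{main}) and re-verifies \emph{all} partial sums directly against the final modulus $2(4p+\gamma)n+1$ — that verification is the bulk of the work, and your sketch assumes it away.

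Two further points would also fail as written. First, your separation argument ``$\alpha_i\not\equiv\beta_j$ because every $\beta_j$ involves a summand of magnitude $\geq 3n+1$ while $|\alpha_i|\leq 3n$'' is not sound: partial sums of large entries are frequently small (in the paper's array many partial sums equal $i+1$ or $-(j+1)$, i.e.\ $O(p)$), so magnitude does not separate the two families. The paper instead avoids these collisions by interleaving the three $H(n;3)$ diagonals at specific positions $D_{2p+\alpha-3},D_{2p+\alpha-1},D_{2p+\alpha+1}$ near the end of the ordering, exploiting sign information about the $H(n;3)$ entries on each diagonal (Theorem \ref{Ladder}), and using the translation $M(i,j)=L(i+1,j+1)$ to dodge two specific bad values in column $0$. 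Second, after replacing each entry $b$ by $b+3n\,\mathrm{sgn}(b)$ the row and column sums of $B'$ vanish only if every row and column of $B$ contains equally many positive and negative entries; this happens to hold for the paper's $H(n;4p)$ but is not part of the statement of Theorem \ref{main1}, so it must be checked. In short, the skeleton is right but the load-bearing verification — distinctness of the combined partial sums modulo $2n(4p+3)+1$ — is missing and cannot be obtained by the inheritance argument you propose.
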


\begin{theorem}
Let $n\equiv 0\md{4}$. Then there exist constants $c$ and $n_0$ such that if $n\geq n_0$ and $n-4p\geq c\log^2{n}$,
 then there is a globally simple Heffter array $H(n;4p+3)$.
\label{main3}
\end{theorem}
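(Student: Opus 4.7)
The plan is to combine an explicit template construction, modelled on the proof of Theorem \ref{main2} for the analogous case $n\equiv 1\pmod 4$, with a probabilistic repair step whose feasibility is guaranteed by the slack $n-4p\ge c\log^2 n$. First I would lay down a skeleton of $nk$ entries (with $k=4p+3$) in a block/diagonal pattern that automatically satisfies the Heffter sum condition in each row and column modulo $2nk+1$. As in the $n\equiv 1\pmod 4$ argument, the natural partial sums along most rows and columns of this skeleton are distinct; only a small set of exceptional rows and columns --- those whose positions interact badly with the parity obstruction forced by $n\equiv 0\pmod 4$ combined with $k\equiv 3\pmod 4$ --- can exhibit partial-sum collisions in the template.

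The next step is to reserve, inside every row and every column, a buffer of at least $c\log^2 n$ of the $n-k$ empty cells, together with a matching reservoir of unused symbol pairs $\{x,-x\}$. For each exceptional row I would relocate a bounded number of its entries into buffer cells, and swap a subset of its entries with reservoir symbols, in a way that preserves the zero row sum and the property that each symbol appears once up to sign. Choosing these relocations uniformly at random inside the buffer, a Chernoff--Azuma type concentration inequality shows that the natural partial sums of the repaired row are distinct with probability at least $1-O(n^{-2})$; a union bound over $O(n)$ rows and $O(n)$ columns then yields a valid repair with positive probability, hence one exists deterministically.

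The main obstacle is the coupling between rows and columns: a swap used to fix a row simultaneously perturbs two columns, threatening their simplicity. I would address this by placing the buffers in structurally separated stripes, so that the random modifications applied to fix any single line intersect only $O(1)$ other lines, and by iterating the repair row-first then column-first with a freshly drawn random seed at each stage. The appearance of $\log^2 n$ rather than merely $\log n$ is then natural: one logarithm is absorbed by the concentration inequality that kills the collision probability on a single line, and the second by the union bound over the $O(n)$ lines whose constraints must be satisfied simultaneously. The lower bound $n\ge n_0$ ensures the constants hidden by the big-$O$ notation throughout this analysis are dominated by the explicit factor $c$.
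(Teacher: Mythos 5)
Your proposal misidentifies the source of the $\log^2 n$ term and, as a result, heads down a path that is both unnecessary and far from being a proof. In the paper the construction is entirely deterministic: one takes the support shifted Heffter array $H(n;4p,3)$ of Theorem \ref{main} and merges it with a $3$-diagonal Heffter array $H(n;3)$ whose diagonals sit at $D_{\beta}$, $D_{\beta+\epsilon}$, $D_{\beta+2\epsilon}$ (Corollary \ref{Ladder2}), exactly as in the $n\equiv 1\md{4}$ case. The only thing that changes when $n\equiv 0\md{4}$ is that one can no longer take $\epsilon=2$ and $\alpha=2h$ (both would share a factor with $n$), so one must \emph{find} integers $\epsilon$ and $\alpha$ coprime to $n$ in prescribed windows. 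The $\log^2 n$ is precisely the Jacobsthal function bound $j(n)=O(\log^2 n)$ of Iwaniec, which guarantees such coprime integers exist in any interval of that length; the hypothesis $n-4p\geq c\log^2 n$ is there only to make those windows wide enough. There are no ``exceptional rows whose positions interact badly with a parity obstruction,'' no repair step, and no probability. Your heuristic that ``one logarithm is absorbed by the concentration inequality and the second by the union bound'' is a rationalization that does not correspond to anything in the actual argument.

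Even taken on its own terms, your sketch has gaps that would each require substantial work: you never verify that a randomly repaired row has distinct partial sums with the claimed probability (the entries are highly structured, not uniform, and the $\binom{k}{2}$ collision events are dependent); you do not explain how swapping entries with a ``reservoir of unused symbol pairs'' preserves the global support condition that each of $1,\dots,nk$ appears exactly once up to sign; and the row--column coupling is waved away with ``structurally separated stripes'' without any construction of such stripes compatible with the diagonal structure that the sum conditions force. The correct route is to prove the analogue of Theorem \ref{4p+3} with the three added diagonals at spacing $\epsilon$ rather than $2$ (checking that the partial-sum inequalities \eqref{Rowparsum4p} and \eqref{Colparsum4p} still separate the new partial sums, which requires $2p+\alpha-\epsilon-1>4p-2$ and $2p+\alpha+\epsilon-1<n$), and then invoke the Jacobsthal bound to produce admissible $\epsilon$ and $\alpha$.
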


We prove Theorems \ref{main2} and \ref{main3} in Sections 3 and 4.

\begin{corollary}
If either {\rm (a)} $n\equiv 1 \md{4}$, $p>0$ and $n\geq 4p+3$, or {\rm (b)} $n\equiv 0\md{4}$ and $n\gg 4p$,
 there exists a pair of orthogonal decompositions of $K_{2n(4p+3)+1}$ into cycles of length $4p+3$.
\end{corollary}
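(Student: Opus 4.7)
The plan is to deduce the corollary directly from Theorems \ref{main2} and \ref{main3} together with the cycle-decomposition interpretation of simple Heffter arrays recorded in the Introduction. First I apply Theorem \ref{main2} in case (a) or Theorem \ref{main3} in case (b) to obtain a globally simple integer Heffter array $H(n; 4p+3)$. By the definition of \emph{globally simple}, the natural left-to-right ordering on each row and the natural top-to-bottom ordering on each column of such an array are themselves simple orderings. In particular, the array is simple in the sense required for Archdeacon's construction, so I can proceed without extracting any non-natural orderings.

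Next I invoke the fact stated immediately after Theorem \ref{Archdeacon} and established in \cite{A}: the existence of a simple $H(m, n; s, t)$ implies the existence of orthogonal decompositions $\mathcal{C}$ and $\mathcal{C}'$ of $K_{2ms+1}$ into cycles of length $s$ and $t$ respectively, where any two cycles drawn from $\mathcal{C}$ and $\mathcal{C}'$ share at most one edge. Specialising this implication to $m = n$ and $s = t = 4p+3$ and feeding in the array produced in the previous step yields a pair of orthogonal decompositions of $K_{2n(4p+3)+1}$ into $(4p+3)$-cycles, which is precisely the content of the corollary.

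The only point worth flagging is that the corollary requires \emph{simplicity} but not \emph{compatibility} of the row and column orderings: compatibility is needed only for the embedding conclusion of Theorem \ref{Archdeacon}, whereas, as noted in the remark following that theorem, relaxing compatibility still leaves the two cycle systems orthogonal (one only loses the passage from a pseudosurface to an orientable surface). Consequently there is no genuine obstacle here; the proof is essentially a one-line deduction that repackages the combinatorial content of Theorems \ref{main2} and \ref{main3} in the language of orthogonal cycle decompositions, and no additional construction or verification is needed beyond citing the relevant results.
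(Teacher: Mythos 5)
Your proposal is correct and matches the paper's (implicit) reasoning exactly: the corollary is a direct consequence of Theorems \ref{main2} and \ref{main3} combined with Archdeacon's result that a simple Heffter array $H(n;k)$ yields orthogonal $k$-cycle decompositions of $K_{2nk+1}$, with compatibility not being needed for orthogonality. No further comment is required.
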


 Even though not explicitly stated, the partial sums (given by the natural ordering) will also be distinct  $\md{2nk+2}$.
As shown in \cite{CMPP}, our results thus also yield orthogonal cycle decompositions of the complete graph of order $2nk+2$ minus a $1$-factor.

The following are useful conventions and results which will be used through out the paper. It is important to be aware that row and column indices are {\em always} calculated modulo $n$, while entries of arrays are {\em always} evaluated as integers.
The {\em support} of an array $A$ is the set containing the absolute values of the entries  of  $A$ and denoted $s(A)$.
In what follows, for a partially filled array $A=[A(i,j)]$  we use $A(i,j)$ to denote the  entry  in cell $(i,j)$ of array $A$.
The cells of an $n\times n$ array can be partitioned into $n$ disjoint {\em diagonals} $D_d$, $d\in [n]$, where \begin{eqnarray*}
D_d:=\{(i+d,i)\mid i\in [n]\}.
\end{eqnarray*}
Let the entry in  row $a$ and column $a$ of diagonal $D_i$ be denoted by $d_i(r_a)$ and $d_i(c_a)$, respectively, with these values defined to be $0$ when there is no entry.
For a given row $a$ we define  $\Sigma(x)=\sum_{i=0}^x d_i(r_a)$ and
for a given column $a$ we define  $\overline{\Sigma}(x)=\sum_{i=0}^x d_i(r_a)$.
For a given row $a$, the values of $\Sigma(x)$ such that $d_x(r_a)$ is non-zero are called the {\em row partial sums} for $a$.
For a given column $a$, the values of $\overline{\Sigma}(x)$ such that $d_x(c_a)$ is non-zero are called the {\em column partial sums} for $a$.
Thus to show an array is globally simple, it suffices to show that the row partial sums are distinct (modulo $2nk+1$) for each row $a$ and that the column partial sums are distinct (modulo $2nk+1$) for each column $c$.
To aid the reader, we will often refer to the following straightforward observations.
\begin{remark} Let $m,x_1,x_2,\alpha_1,\alpha_2,\beta_1,\beta_2$ be integers and $m>0$. Then for:
 \begin{eqnarray}
 -m\leq x_1,x_2\leq m,&& x_1\equiv x_2\md{2m+1}\Rightarrow x_1=x_2;  \label{mods}\\
 0\leq x_1,x_2< m,&& x_1\equiv x_2\md{m} \Rightarrow x_1=x_2; \label{modl}\\
 -\frac{m}{2}<\alpha_1,\alpha_2<\frac{m}{2},&& \beta_1m+\alpha_1=\beta_2m+\alpha_2\Rightarrow \beta_1=\beta_2\ and\ \alpha_1=\alpha_2; \label{n}\\
 -m<x_1<0<x_2<m,&& x_1\equiv x_2\md{m} \Rightarrow x_2=m+x_1.  \label{mod=}
\end{eqnarray}
\end{remark}

\section{Globally simple integer $H(n;4p)$ constructions}
In this section we prove Theorem \ref{main1}. That is, we construct a globally simple Heffter array $H(n;4p)$ for each $n$ and $p\geq3$ such that $n\geq 4p$. Note that a globally simple $H(n,8)$ was constructed in \cite{CMPP} and it is easy to see that all Heffter arrays $H(n,4)$ are globally simple. We will divide this section according to the parity of $p$. Throughout this section $k=4p$.
\subsection{$p$ is odd}

Let $p>1$ be odd and $I=[\frac{p-1}{2}]$. We remind the reader that throughout this paper, rows and columns are evaluated modulo $n$, while entries are always evaluated as integers.

For $x\in [n]$ and $i\in I$ define the array $A$ to have the following entries:
\begin{eqnarray*}
4i+1+kx&\mbox{ in cell}&(4i+x,x)\in  D_{4i}\nonumber,\\
-(4i+2)-k(x+1\md{n}) &\mbox{ in cell}&(4i+1+x,x)\in D_{4i+1},\\
-(k-(4i+3))-kx    &\mbox{ in cell}&(4i+2+x,x)\in  D_{4i+2},\\
k-(4i+4)+k(x+1\md{n})&\mbox{ in cell}&(4i+3+x,x)\in  D_{4i+3},
\end{eqnarray*}\vspace{-0.7cm}\begin{eqnarray*}
2p-1+kx &\mbox{ in cell}&(2p-2+x,x)\in  D_{2p-2},\\
-2p-k(x+1\md{n}) &\mbox{ in cell}&(2p-1+x,x)\in  D_{2p-1},
\end{eqnarray*}\vspace{-0.7cm}\begin{eqnarray*}
-(2p-2-4i)-kx &\mbox{ in cell}&(2p+4i+x,x)\in  D_{2p+4i},\\
2p-3-4i+k(x+1 \md{n})&\mbox{ in cell}&(2p+4i+1+x,x)\in  D_{2p+4i+1},\\
2p+4+4i+kx &\mbox{ in cell}&(2p+4i+2+x,x)\in  D_{2p+4i+2}, \\
-(2p+5+4i)-k(x+1\md{n}) &\mbox{ in cell}&(2p+4i+3+x,x)\in  D_{2p+4i+3},
\end{eqnarray*}\vspace{-0.7cm}\begin{eqnarray*}
-(2p+1+kx) &\mbox{ in cell}&(k-2+x,x)\in  D_{k-2},\\
k+k(x+1\md{n}) &\mbox{ in cell}&(k-1+x,x)\in  D_{k-1}.\\
\end{eqnarray*}
\begin{example} A globally simple Heffter array $H(17;12)$ ($n=17$ and $p=3$).
\begin{scriptsize}
\begin{center}
\begin{tabular}{|c|c|c|c|c|c|c|c|c|c|c|c|c|c|c|c|c|}
\hline
1& & & & & & 96&-91&-119&118&135&-136&-162&161&188&-189&-2\\ \hline
-14&13& & & & & & 108&-103&-131&130&147&-148&-174&173&200&-201\\ \hline
-9&-26&25& & & & & & 120&-115&-143&142&159&-160&-186&185&8\\ \hline
20&-21&-38&37& & & & & & 132&-127&-155&154&171&-172&-198&197\\ \hline
5&32&-33&-50&49& & & & & & 144&-139&-167&166&183&-184&-6\\ \hline
-18&17&44&-45&-62&61& & & & & & 156&-151&-179&178&195&-196\\ \hline
-4&-30&29&56&-57&-74&73& & & & & & 168&-163&-191&190&3\\ \hline
15&-16&-42&41&68&-69&-86&85& & & & & & 180&-175&-203&202\\ \hline
10&27&-28&-54&53&80&-81&-98&97& & & & & & 192&-187&-11\\ \hline
-23&22&39&-40&-66&65&92&-93&-110&109& & & & & & 204&-199\\ \hline
-7&-35&34&51&-52&-78&77&104&-105&-122&121& & & & & &12 \\ \hline
24&-19&-47&46&63&-64&-90&89&116&-117&-134&133& & & & &  \\ \hline
 &36&-31&-59&58&75&-76&-102&101&128&-129&-146&145& & & &  \\ \hline
 & &48&-43&-71&70&87&-88&-114&113&140&-141&-158&157& & &  \\ \hline
 & & &60&-55&-83&82&99&-100&-126&125&152&-153&-170&169& &  \\ \hline
 & & & &72&-67&-95&94&111&-112&-138&137&164&-165&-182&181&  \\ \hline
 & & & & &84&-79&-107&106&123&-124&-150&149&176&-177&-194&193 \\ \hline
\end{tabular}
\end{center}
\end{scriptsize}
\end{example}

\subsubsection{Support of the array when $p$ is odd}
Observe that for each $\alpha\in[k]$, $\displaystyle s(D_{\alpha})=\{ S_\alpha+kx| x\in [n]\}$ in $A$ where $S_\alpha$ satisfies:

$
\begin{array}{ll}
\{S_{4i} | i\in I\}=\{1,5,\dots ,2p-5\},&\{S_{4i+2} | i\in I\}=\{2p+3,2p+7,\dots,4p-3\},\\
\{S_{4i+1}| i\in I\}=\{2,6,\dots,2p-4\},&\{S_{4i+3} | i\in I\}=\{2p+2,2p+6,\dots,4p-4\},\\
\{S_{2p+4i} | i\in I\}=\{4,8,\dots,2p-2\},&\{S_{2p+4i+2} | i\in I\}=\{2p+4,2p+8,\dots,4p-2\}, \\
\{S_{2p+4i+1}| i\in I\}=\{3,7,\dots,2p-3\},&\{S_{2p+4i+3} | i\in I\}=\{2p+5,2p+9,\dots,4p-1\}, \\
\end{array}$

$\begin{array}{llll}
S_{2p-2}=2p-1,&S_{4p-2}=2p+1,&
S_{2p-1}=2p,& S_{4p-1}=4p.\\
\end{array}
$

Hence it is easy to see that $s(A)=\{1,2,\dots,nk\}$.

\subsubsection{Distinct column partial sums when $p$ is odd}

Recall that $k=4p$. In this section we will show that in the array $A$, $|\overline{\Sigma}(\alpha)|\leq nk$ for all $\alpha\in [k]$. Hence by (\ref{mods}), $$\overline{\Sigma}(\alpha_1)\equiv \overline{\Sigma}(\alpha_2)\md{2nk+1} \Rightarrow \overline{\Sigma}(\alpha_1)=\overline{\Sigma}(\alpha_2).$$
But then we will show that $\overline{\Sigma}(\alpha_1)\not=\overline{\Sigma}(\alpha_2)$ by comparing these values modulo $k$. Hence we obtain the required result $\overline{\Sigma}(\alpha_1)\not\equiv \overline{\Sigma}(\alpha_2)\md{2nk+1}$.

First observe that for each column $a$ and $i\in I$:

$\displaystyle \sum_{j=0}^3 d_{4i+j}=-2$,  and  $\displaystyle \sum_{j=0}^3 d_{2p+4i+j}=-2.$

Now the partial column sums for each column $a$ can be calculated as follows:
\begin{eqnarray*}
\overline{\Sigma}(4i)&=&4i+1+ak-2i=2i+1+ak<nk\\
&\Rightarrow&\{\overline{\Sigma}(4i)\md{k} \mid i\in I\}=\{1,3,\dots, p-2\}.\\
\overline{\Sigma}(4i+1)&=&2i+1+ak-(4i+2)-k(a+1\md{n})\\
&=&-(2i+1)+ka-k(a+1\md{n}),\\
&\Rightarrow&\{\overline{\Sigma}(4i+1)\md{k} \mid i\in I\}=\{3p+2,\dots, 4p-3, 4p-1\}.\\
\overline{\Sigma}(4i+2)&=&-(2i+1)+ka-k(a+1\md{n})-k+(3+4i)-ak\\
&=&2i+2-k-k(a+1\md{n}),\\
&\Rightarrow&\{\overline{\Sigma}(4i+2)\md{k} \mid i\in I\}=\{2,4,\dots,p-1\}.\\
\overline{\Sigma}(4i+3)&=&2i+2-k-k(a+1\md{n})+k-(4+4i)+k(a+1\md{n})\\
&=&-2i-2,\\
&\Rightarrow&\{\overline{\Sigma}(4i+3)\md{k} \mid i\in I\}=\{3p+1,\dots, 4p-4,4p-2\}.\\
\overline{\Sigma}(2p-2)&=&-2(p-3)/2-2+2p-1+ak=p+ak.\\
\overline{\Sigma}(2p-1)&=&p+ak-2p-k(a+1\md{n})=-p+ak-k(a+1\md{n}).\\
\overline{\Sigma}(2p+4i)&=&-p+ak-k(a+1\md{n})-2p+2+4i-ak-2i\\
&=&-3p+2i+2-k(a+1\md{n}),\\
&\Rightarrow&\{\overline{\Sigma}(2p+4i)\md{k} \mid i\in I\}=\{p+2,\dots 2p-3,2p-1\}.\\
\overline{\Sigma}(2p+4i+1)&=&-3p+2i+2-k(a+1\md{n})+2p-3-4i+k(a+1\md{n})\\
&=&-p-(2i+1),\\
&\Rightarrow&\{\overline{\Sigma}(2p+4i+1)\md{k}\mid i\in I\}=\{2p+2,\dots ,3p-3, 3p-1\}.\\
\overline{\Sigma}(2p+4i+2)&=&-p-(2i+1)+2p+4+4i+ak=p+2i+3+ak,\\
&\Rightarrow&\{\overline{\Sigma}(2p+4i+2)\md{k}\mid i\in I\}=\{p+3,p+5,\dots,2p\}.\\
\overline{\Sigma}(2p+4i+3)&=&p+2i+3+ak-2p-5-4i-k(a+1\md{n}),\\
&=&-p-2(i+1)+ak-k(a+1\md{n})\\
&\Rightarrow&\{\overline{\Sigma}(2p+4i+4)\ mod\ k\mid i\in I\}=\{2p+1,\dots ,3p-4,3p-2\}.\\
\overline{\Sigma}(k-2)&=&-p-2((p-3)/2+1)+ak-k(a+1\md{n})-2p-1-ak\\
&=&-k-k(a+1\md{n})\neq 0. \\
\overline{\Sigma}(k-1)&=&0.
\end{eqnarray*}


One can easily check from above calculations that for column $a$, $\overline{\Sigma}(\alpha_1)\neq \overline{\Sigma}(\alpha_2) \md{k}$ for all $\alpha_1,\alpha_2\in[k-1]$ and $\alpha_1\neq \alpha_2$. Furthermore $\overline{\Sigma}(k-2)=-d_{k-1}(c_a)\neq 0$. Also it is not hard to check that for all $\alpha\in[k]$,  $|\overline{\Sigma}(\alpha)|\leq nk$. Hence all the column partial sums are distinct $\md{2kn+1}$.

\subsubsection{Distinct row partial sums when $p$ is odd}
As elements in $s(D_\alpha)$ are all congruent modulo $k$ in $A$, 
 we have $d_{\alpha}(r_a)\equiv d_\alpha(c_b) \md{k}$  for all $a,b\in[n], \alpha\in [k]$. Hence $\displaystyle{ \Sigma_{j=0}^\alpha d_j(r_a)\equiv \Sigma_{j=0}^\alpha d_j(c_b)\md{k}}$. Now as the partial column sums up to and including diagonal $4p-2$ are distinct modulo $k$, partial sums of rows up to and including diagonal $4p-2$ are distinct modulo $k$.
To use the same argument as above, we thus just need to show that $|\Sigma(\alpha)|\leq nk$ for each row $a$ and $\alpha\in [k]$.

First observe that for each $0\leq j\leq 2p-2$, $d_{2j}(r_a)$ and $d_{2j+1}(r_a)$ are in the form:

$d_{2j}(r_a)=\alpha +s_jk(a-\beta\md{n})$ and $d_{2j+1}(r_a)=-\alpha-1 -s_jk(a-\beta\md{n})$ where $\alpha$ and $\beta$ are integers and $s_j\in\{1,-1\}$.

Hence $d_{2j}(r_a)+d_{2j+1}(r_a)=-1$ and $\Sigma(2j+1)=-(j+1)$,  for each $0\leq j\leq 2p-2$.

Now $d_{4i}(r_a),d_{2p-2}(r_a), d_{2p+4i+2}\geq 0$. Hence $-nk\leq \Sigma(2\alpha)=-\alpha+d_{2\alpha}(r_a)\leq nk$ for $\alpha\in\{2i,p-1,p+2i+1|i\in I\}. $

Finally
\begin{eqnarray*}
\Sigma(4i+2)&=&-(2i+1)-k+4i+3-k(a-4i-2\md{n})\\
&=&2i+2-k-k(a-4i-2\md{n})\geq -nk,\\
\Sigma(2p+4i)&=&-(p+2i)-2p+2+4i-k(a-4i-2p\md{n})\\
&=&-3p+2i+2-k(a-4i-2p\md{n})\geq -nk,\\
\Sigma(4p-2)&=&-(2p-1)-2p-1-k(a-k+2\md{n})\\
&=&-k-k(a-k+2\md{n})\geq -nk, \\
\Sigma(4p-1)&=&0.
\end{eqnarray*}

\subsection{$p$ is even}

Let $p>2$ be even and $I=[\frac{p-2}{2}]$.

For $x\in [n]$ and $i\in I$ define the array $A$ to have the following entries:
\begin{eqnarray*}
4i+1+kx&\mbox{ in cell}&(4i+x,x)\in  D_{4i}\nonumber,\\
-(4i+2)-k(x+1\md{n}) &\mbox{ in cell}&(4i+1+x,x)\in D_{4i+1},\\
-(k-(4i+3))-kx    &\mbox{ in cell}&(4i+2+x,x)\in  D_{4i+2},\\
k-(4i+4)+k(x+1\md{n})&\mbox{ in cell}&(4i+3+x,x)\in  D_{4i+3},
\end{eqnarray*}\vspace{-0.7cm}\begin{eqnarray*}
2p-3+kx &\mbox{ in cell}&(2p-4+x,x)\in  D_{2p-4},\\
-2p+2-k(x+1\md{n}) &\mbox{ in cell}&(2p-3+x,x)\in  D_{2p-3},
\end{eqnarray*}\vspace{-0.7cm}\begin{eqnarray*}
-(2p-4i)-kx &\mbox{ in cell}&(2p+4i-2+x,x)\in  D_{2p+4i-2},\\
2p-1-4i+k(x+1 \md{n})&\mbox{ in cell}&(2p+4i-1+x,x)\in  D_{2p+4i-1},\\
2p+2+4i+kx &\mbox{ in cell}&(2p+4i+x,x)\in  D_{2p+4i}, \\
-(2p+3+4i)-k(x+1\md{n}) &\mbox{ in cell}&(2p+4i+1+x,x)\in  D_{2p+4i+1},
\end{eqnarray*}\vspace{-0.7cm}\begin{eqnarray*}
-4-kx &\mbox{ in cell}&(k-6+x,x)\in  D_{k-6},\\
3+k(x+1 \md{n})&\mbox{ in cell}&(k-5+x,x)\in  D_{k-5},\\
k-2+kx &\mbox{ in cell}&(k-4+x,x)\in  D_{k-4}, \\
-(k-1)-k(x+1\md{n}) &\mbox{ in cell}&(k-3+x,x)\in  D_{k-3},\\
\\
-(2p+1+kx) &\mbox{ in cell}&(k-2+x,x)\in  D_{k-2},\\
k+k(x+1\md{n}) &\mbox{ in cell}&(k-1+x,x)\in  D_{k-1},\\
\end{eqnarray*}
\begin{example} A globally simple Heffter array $H(17,16)$ ($n=17$ and $p=4$).
\begin{scriptsize}
\begin{center}
\begin{tabular}{|c|c|c|c|c|c|c|c|c|c|c|c|c|c|c|c|c|}
\hline
1& &64 &-57 &-95 &94 & 115&-91&-119&118&183&-184&-214&213&252&-253&-2\\ \hline
-18&17& &80 &-73 &-111&110& 131&-103&-131&130&199&-200&-230&229&268&-269\\ \hline
-13&-34&33& &96 &-89 &-127 &126 & 147&-115&-143&142&215&-216&-246&245&12\\ \hline
28&-29&-50&49& &112 &-105 &-143 &142&163&-127&-155&154&231&-232&-262&261\\ \hline
5&44&-45&-66&65& &128 &-121 &-159 &158& 179&-139&-167&166&247&-248&-6\\ \hline
-22&21&60&-61&-82&81& &144 &-137 &-175 &174 & 195&-151&-179&178&263&-264\\ \hline
-8&-38&37&76&-77&-98&97& &160 & -153&-191&190 & 211&-163&-191&190&7\\ \hline
23&-24&-54&53&92&-93&-114&113& &176 &-169& -207&206 & 227&-175&-203&202\\ \hline
10&39&-40&-70&69&108&-109&-130&129& &192 &-185&-223 &222 & 243&-187&-11\\ \hline
-27&26&55&-56&-86&85&124&-125&-146&145& &208 &-201&-239 &238 & 259&-199\\ \hline
-4&-43&42&71&-72&-102&101&140&-141&-162&161& &224 & -217&-255 &254 &3 \\ \hline
19&-20&-59&58&87&-88&-118&117&156&-157&-178&177& &240 &-233&-271 &270  \\ \hline
14 &35&-36&-75&74&103&-104&-134&133&172&-173&-194&193& &256 &-249 &-15  \\ \hline
-31 &30 &51&-52&-91&90&119&-120&-150&149&188&-189&-210&209& &272 &-265  \\ \hline
-9 &-47 &46 &67&-68&-107&106&135&-136&-166&165&204&-205&-226&225& &16  \\ \hline
32 &-25&-63 &62 &83&-84&-123&122&151&-152&-182&181&220&-221&-242&241&  \\ \hline
   &48 &-41&-79 &78&99&-100&-139&138&167&-168&-198&197&236&-237&-258&257 \\ \hline
\end{tabular}
\end{center}
\end{scriptsize}
\end{example}

\subsubsection{Support when $p$ is even}

Observe that for each $\alpha\in[k]$, $\displaystyle s(D_{\alpha})= \{S_\alpha +kx | x\in [n]\}$ in $A$ where $S_\alpha$ satisfies:

$\begin{array}{ll}
\{S_{4i}|i\in I\}=\{1,5,\dots ,2p-7\},&\{S_{4i+2}|i\in I\}=\{2p+5,2p+9,\dots, 4p-3\}, \\
\{S_{4i+1}|i\in I\}=\{2,6,\dots ,2p-6\},& \{S_{4i+3}|i\in I\}=\{2p+4,2p+8,\dots, 4p-4\},\\
\{S_{2p+4i-2}|i\in I\}=\{8,12,\dots, 2p\},&\{S_{2p+4i}|i\in I\}=\{2p+2,2p+6,\dots, 4p-6\},\\
\{S_{2p+4i-1}|i\in I\}=\{7,11,\dots, 2p-1\},&\{S_{2p+4i+1}|i\in I\}=\{2p+3,2p+7,\dots, 4p-5\},\\
\end{array}$

$
\begin{array}{llll}
S_{2p-4}=2p-3,&S_{4p-2}=2p+1,&
S_{2p-3}=2p-2,&S_{4p-1}=4p,\\
S_{4p-6}=4,&S_{4p-4}=4p-2,&
S_{4p-5}=3,& S_{4p-3}=4p-1.\\
\end{array}$



Hence it is easy to see that $s(A)=\{1,2,\dots,nk\}$.

\subsubsection{Distinct column partial sums when $p$ is even}

First observe that for each column $a$ and $i\in I$:

$\displaystyle \sum_{j=0}^3 d_{4i+j}(c_a)=-2$,  and  $\displaystyle \sum_{j=0}^3 d_{2p+4i-2+j}(c_a)=-2.$

Similarly to the previous subsection, the column partial sums for each column $a$ can be calculated as follows:
\begin{eqnarray*}
\overline{\Sigma}(4i)&=&2i+1+ka \Rightarrow\{\overline{\Sigma}(4i)\md{k}\mid i\in I\}=\{1,3,\dots, p-3\},\\
\overline{\Sigma}(4i+1)&=&-(2i+1)+ka-k(a+1\md{n})\\
&\Rightarrow&\{\overline{\Sigma}(4i+1)\md{k}\mid i\in I\}=\{3p+3,\dots, 4p-3, 4p-1\},\\
\overline{\Sigma}(4i+2)&=&2i+2-k-k(a+1\md{n})\\
&\Rightarrow&\{\overline{\Sigma}(4i+2)\md{k}\mid i\in I\}=\{2,4,\dots,p-2\},\\
\overline{\Sigma}(4i+3)&=&-2i-2\Rightarrow\{\overline{\Sigma}(4i+3)\md{k}\mid i\in I\}=\{3p+2,\dots, 4p-4,4p-2\},\\
\overline{\Sigma}(2p-4)&=&-2(p-4)/2-2+2p-3+ak=p-1+ka,\\
\overline{\Sigma}(2p-3)&=&-p+1+ka-k(a+1\md{n}),\\
\overline{\Sigma}(2p+4i-2)&=&-3p+2i+1-k(a+1\md{n})\\
&\Rightarrow&\{\overline{\Sigma}(2p+4i-2)\md{k} \mid i\in I\}=\{p+1,\dots ,2p-1,2p-3\},\\
\overline{\Sigma}(2p+4i-1)&=&-p-2i,\\
&\Rightarrow&\{\overline{\Sigma}(2p+4i-1)\md{k} \mid i\in I\}=\{2p+4,\dots ,3p-2, 3p\},\\
\overline{\Sigma}(2p+4i)&=&-p-2i+2p+4i+ak+2=p+2i+2+ka\\
&\Rightarrow&\{\overline{\Sigma}(2p+4i+1)\md{k} \mid i\in I\}=\{p+2,p+4,\dots, 2p-2\},\\
\overline{\Sigma}(2p+4i+1)&=&-p-2i-1+ka-k(a+1\md{n})\\
&\Rightarrow&\{\overline{\Sigma}(2p+4i+4)\md{k} \mid i\in I\}=\{2p+3,\dots ,3p-3,3p-1\},\\
\overline{\Sigma}(k-6)&=&-(2p+1)-k(a+1\md{n}),\\
\Sigma(k-5)&=&-2p+2,\\
\Sigma(k-4)&=& 2p+ka,\\
\Sigma(k-3)&=&-2p+1+ka-k(a+1\md{n}),\\
\Sigma(k-2)&=&-k-k(a+1\md{n})\neq 0,\\
\Sigma(k-1)&=&0.
\end{eqnarray*}

One can easily check from above calculations that for column $a$, $\overline{\Sigma}(\alpha_1)\neq \overline{\Sigma}(\alpha_2)\md{k}$ for all $\alpha_1,\alpha_2\in[k-1]$ and $\alpha_1\neq \alpha_2$. It is also straightforward to check that $|\overline{\Sigma}(\alpha)|\leq nk$. Hence all the column partial sums are distinct modulo $2kn+1$.

\subsubsection{Distinct row partial sums when $p$ is even}

Similarly to the case when $p$ is odd, we just need to show that $|\Sigma(\alpha)|\leq nk$ for each row $a$ and $\alpha\in [k]$. As before, $d_{2j}(r_a)+d_{2j+1}(r_a)=-1$ and $\Sigma(2j+1)=-(j+1)$,  for each $0\leq j\leq 2p-2$.

Now $d_{4i}(r_a),d_{2p-4}(r_a), d_{2p+4i}(r_a), d_{k-4}(r_a)\geq 0$ for $i\in I$. Hence $-nk\leq \Sigma(2\alpha)=-\alpha+d_{2\alpha}(r_a)\leq nk$ for $\alpha\in\{2i,p-2,p+2i,2p-2|i\in I\}.$

Finally,
\begin{eqnarray*}
\Sigma(4i+2)&=&-(2i+1)-k+4i+3-k(a-4i-2\md{n}),\\
&=&2i+2-k-k(a-4i-2\md{n})\geq -nk,\\
\Sigma(2p+4i-2)&=&-(p+2i-1)-2p+4i-k(a-4i-2p+2\md{n}),\\
&=&-3p+2i+1-k(a-4i-2p\md{n})\geq -nk,\\
\Sigma(k-6)&=&-(2p-3)-4-k(a-4i-2p\md{n})\geq -nk,\\
\Sigma(k-2)&=&-(2p-1)-2p-1-k(a-k+2\md{n}) \\
&=&-k-k(a-k+2\md{n})\geq -nk,\\
\Sigma(k-1)&=&0.
\end{eqnarray*}

So Theorem 1.4 is proven.

\section{Support shifted globally simple Heffter arrays}\label{thmex}

The array $A$ is defined to be a {\em support shifted Heffter array} $H(n;4p,\gamma)$ if it satisfies the following properties:
\begin{itemize}
\item[{\bf P1.}] Every row and every column of $A$ has $4p$ filled cells.
\item[{\bf P2.}] $s(A)=\{\gamma n+1,\dots,(4p+\gamma)n\}$.
\item[{\bf P3.}] Elements in every row and every column sum to $0$.
\item[{\bf P4.}] Partial sums are distinct in each row and each column of $A$  modulo $2(4p+\gamma)n+1$.
\end{itemize}

A related generalization of Heffter arrays is studied in \cite{ItalE2}.
Note that a support shifted Heffter array $H(n;4p,0)$ is in fact an integer Heffter array $H(n;4p)$.
In the following section we let $\gamma=3$ and we merge
the support shifted Heffter array $H(n;4p,3)$ constructed below with a Heffter array $H(n;3)$ to obtain Heffter arrays $H(n;4p+3)$.
In this section we write our results generally in terms of $\gamma$ in case the following theorem is useful for future research.

\begin{theorem}\label{main}
Let $p>0$, $n\geq 4p$, and $\gamma > 0$. If there exists  $2p-1\leq \alpha\leq n-1-2p$ with $\mbox{gcd}(n,\alpha)=1$ then
there exists a globally simple support shifted Heffter array $A$ where the non-empty cells are precisely on the diagonals $D_i$ for $i\in[4p-1]\cup\{2p+\alpha\}$.
\end{theorem}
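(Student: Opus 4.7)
The plan is to construct $A$ by adapting the integer $H(n;4p)$ construction used to prove Theorem~\ref{main1}. In that construction, diagonals $D_0, D_1, \ldots, D_{4p-1}$ are used: entries on $D_0, \ldots, D_{4p-2}$ are arranged in blocks of four that cancel down to a small residue in every row and column partial sum, while $D_{4p-1}$ acts as a \emph{closing} diagonal whose entries simultaneously zero out both the row and column totals. For Theorem~\ref{main}, I would (i) inflate every entry in absolute value by $\gamma n$ so that the support becomes $\{\gamma n+1, \ldots, (4p+\gamma)n\}$, and (ii) replace the closing diagonal $D_{4p-1}$ by the displaced diagonal $D_{2p+\alpha}$.

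The key subtlety is that in the Section~2 construction the row- and column-sum deficits $R(a)$ and $C(c)$ over the first $4p-1$ diagonals satisfy $R(a) = C(a - (4p-1))$ (indices mod $n$); this matches a closing diagonal at $D_{4p-1}$, equivalently $\alpha = 2p-1$. For a general $\alpha$, closing at $D_{2p+\alpha}$ instead requires $R(a) = C(a - 2p - \alpha)$ mod $n$, which fails without modification. My plan is to introduce a column-dependent correction on one of the auxiliary diagonals (a natural candidate is $D_{4p-2}$, whose Section~2 entries have the form $-(2p+1)-kx$), replacing the column argument $x$ by $x + s \md{n}$ for a shift $s$ tuned to $\alpha$. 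The hypothesis $\gcd(n,\alpha)=1$ ensures that $a \mapsto a - 2p - \alpha$ acts as a single $n$-cycle on $[n]$, so that a single value of $s$ globally repairs the compatibility and the $n$ entries placed on $D_{2p+\alpha}$ take $n$ distinct absolute values (contributing the required block to the support). The bounds $2p-1 \leq \alpha \leq n-1-2p$ guarantee that $D_{2p+\alpha}$ is disjoint from $\{D_0, \ldots, D_{4p-2}\}$ and lies within $[n]$.

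With $A$ so defined, P1 is immediate (each of the $4p$ used diagonals contributes one cell per row and per column), P2 follows from the Section~2 support calculation translated by $\gamma n$, and P3 holds by the choice of entries on $D_{2p+\alpha}$. For P4, I would mirror the partial-sum analysis of Section~2: the quantities $\Sigma(x)$ and $\overline{\Sigma}(x)$ for $x \leq 4p-2$ differ from the Section~2 values only through the shift $s$ on $D_{4p-2}$, and the mod-$k$ distinctness argument still applies provided the correction preserves the residue pattern of the blocks. The new partial sum at $x = 2p+\alpha$ equals $0$, and is distinct from all earlier partial sums because none of the Section~2 partial sums $\Sigma(x)$ or $\overline{\Sigma}(x)$ with $x < 4p-1$ vanishes.

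The principal obstacle I anticipate is controlling the absolute magnitudes of the modified partial sums. The shift by $\gamma n$ enlarges every entry's magnitude, and the column-dependent correction on $D_{4p-2}$ can trigger wrap-arounds modulo $n$ in intermediate sums; both effects threaten the bound $|\Sigma(x)|, |\overline{\Sigma}(x)| \leq (4p+\gamma)n$ that is needed to promote distinctness modulo $k$ to distinctness modulo $2(4p+\gamma)n+1$ via~(\ref{mods}). Overcoming this will likely require splitting into cases according to where the wrap-around occurs, in the style of the delicate end-of-row computations in Section~2.
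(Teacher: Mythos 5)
Your plan has a concrete gap at its central step: the single uniform shift $s$ on $D_{4p-2}$ cannot reconcile the row and column closing conditions. With the Section~2 entries on $D_0,\dots,D_{4p-2}$ fixed, the entry required to close column $x$ is $C(x)=k+k(x+1\md{n})$ and the entry required to close row $a$ is $R(a)=k+k(a-4p+2\md{n})$; closing on $D_{2p+\alpha}$ puts the column-$x$ cell in row $x+2p+\alpha$, so you need $R(x+2p+\alpha)=C(x)$, i.e.\ $k(x+\alpha-2p+2\md{n})=k(x+1\md{n})$, which forces $\alpha\equiv 2p-1\md{n}$. Replacing the $D_{4p-2}$ entry $-(2p+1+kx)$ by $-(2p+1+k(x+s\md{n}))$ changes $C(x)$ by $k((x+s\md{n})-x)\in\{ks,\,ks-kn\}$ and changes $R(a)$ by the analogous quantity for the column containing the $D_{4p-2}$ cell of row $a$; the net effect on $R(x+2p+\alpha)-C(x)$ is therefore always a multiple of $kn$. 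But the mismatch you must cancel is $k\bigl((x+1\md{n})-(x+\alpha-2p+2\md{n})\bigr)\equiv k(2p-1-\alpha)\not\equiv 0\md{kn}$ for $\alpha\neq 2p-1$. So no choice of $s$ yields zero row and column sums, and the construction does not exist as described. Beyond this, the support of the forced closing entries and the entire partial-sum analysis are left as intentions rather than arguments, and you yourself flag that the magnitude bounds needed to invoke (\ref{mods}) may fail.

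For contrast, the paper does not adapt the Section~2 array at all: it builds a new array in which entries along each diagonal vary by $\pm 2$ (rather than by $k$), consecutive diagonals pair off so that $d_{2i}(r_a)+d_{2i+1}(r_a)=\pm 1$ telescopes, and the two large-magnitude diagonals $D_{2p}$ and $D_{2p+\alpha}$ are filled by traversing cells with stride $\alpha$ — this is where $\gcd(n,\alpha)=1$ is actually used, to make that traversal cover the diagonal and to force the interlocking relation $x_1+x_2=n-1$ between the $D_{2p}$ and $D_{2p+\alpha}$ cells in a common row or column. Distinctness of partial sums is then obtained by monotonicity and separation into disjoint intervals, not by the mod-$k$ residue argument of Section~2. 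If you want to salvage your approach, you would need a genuinely column-dependent (non-uniform) redistribution across the earlier diagonals that solves the difference equation $\delta_C(x)-\delta_C(x+\alpha-2p+2\md{n})=k\bigl((x+1\md{n})-(x+\alpha-2p+2\md{n})\bigr)$ while preserving the support and all partial-sum bounds; that is a substantial reconstruction, not a small correction.
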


The proof of Theorem \ref{main} will be broken into sections. In Subsection \ref{definition} we will define an array $A$ that has $4p$ entries per row and column, with the right support, thus  verifying  that $A$ satisfies Properties P1 and P2. Then in Subsection
\ref{rowcolsums} we will show that each row and column of $A$ sums equal to $0$, thus verifying $A$ satisfies Property P3. Finally in Subsections \ref{partrowsum}, \ref{partcolsum} and \ref{partcolsum0} we will verify that $A$ satisfies Property P4 by showing, respectively, that the row partial sums are distinct, the partial sums for the non-zero columns are distinct and then finally the partial sums for column $0$ are distinct modulo $2(4p+\gamma)n+1$.

\begin{remark}
Throughout Section \ref{thmex} it will be assumed that $p>0$, $n\geq 4p$, and $\gamma> 0$, $2p-1\leq \alpha\leq n-1-2p$ and gcd$(\alpha,n)=1$.  We will define $I=[p]$, $2I=\{2i\mid i\in I\}$, $J=[p-1]$, $\mathbb{D}=\{0,1,\dots,4p-2,2p+\alpha\}$ and $T=\mathbb{D}\setminus 2I.$ Further we remind the reader that row and column numbers will be calculated modulo $n$ with residues from $[n]$, while entries are
calculated as integers.
\end{remark}
\subsection{Definition of the array $A$}\label{definition}
Let $A=[A(i,j)]$ be an $n\times n$ array with filled cells defined by the $4p$ diagonals
$$D_{2i}, D_{2i+1}, D_{2p}, D_{2p+1+2j}, D_{2p+2+2j},D_{2p+\alpha},$$
where $ i\in I$ and $ j\in J$, and with entries for each $x\in [n]$:
\begin{eqnarray}
(\gamma+2)n+4in-2x&\mbox{ in cell}&(2i-x,-x)\in  D_{2i}\nonumber,\nonumber\\
-\gamma n-4in-1-2x &\mbox{ in cell}&(2i+1+x,x)\in D_{2i+1},\nonumber \\
-(4p+\gamma)n+2x&\mbox{ in cell}&(2p-\alpha x,-\alpha x)\in  D_{2p},\nonumber\label{A}\\
(4p+\gamma-6)n-4jn+1+2x&\mbox{ in cell}&(2p+1+2j-x,-x)\in  D_{2p+1+2j},\nonumber\\
-(4p+\gamma-4)n+4jn+2x&\mbox{ in cell}&(2p+2+2j+x,x)\in  D_{2p+2+2j},\nonumber\\
(4p+\gamma-2)n+1+2x&\mbox{ in cell}&(2p+\alpha+\alpha x,\alpha x)\in  D_{2p+\alpha}\nonumber.
\end{eqnarray}
It is useful to note that the set $\mathbb{D}$ contains the indices for the non-empty diagonals of $A$.

 Then for each $i\in I$ and $j\in J$:
\begin{eqnarray}
 s(D_{2i}\cup D_{2i+1})  &=& \{\gamma n+4in+1,\dots,(\gamma+2)n+4in\};\nonumber\\
 s(D_{2p+1+2j}\cup D_{2p+2+2j}) & = & \{(4p+\gamma-6)n-4n+1,\dots,(4p+\gamma-4)n-4jn\}\nonumber \\
& = &  \{(\gamma+2)n+4\epsilon n+1,\dots,(\gamma+4)n+4\epsilon n\}\  (\epsilon=p-2-j);\nonumber\\
 s(D_{2p}\cup D_{2p+\alpha}) & = & \{(4p+\gamma)n-2n+1,\dots,(4p+\gamma)n\}.\nonumber
\end{eqnarray}

Hence $s(A)=\{\gamma n+1,\dots,(4p+\gamma)n\}$.

\begin{example} \label{appie}
Here we display a support shifted Heffter array $H(17;12,3)$ (the array $A$ above) illustrating Theorem $\ref{main}$ with $\alpha=6$.
\begin{scriptsize}
\begin{center}
\begin{tabular}{|c|c|c|c|c|c|c|c|c|c|c|c|c|c|c|c|c|}
\hline
85&&&&&252&&-105&104&-169&168&-253&-212&213&-148&149&-84\\
\hline
-52&53&&&&&224&&-103&102&-167&166&-225&-214&215&-150&151\\
\hline
153&-54&55&&&&&230&&-101&100&-165&164&-231&-216&217&-152\\
\hline
-120&121&-56&57&&&&&236&&-99&98&-163&162&-237&-218&219\\
\hline
221&-122&123&-58&59&&&&&242&&-97&96&-161&160&-243&-220\\
\hline
-188&189&-124&125&-60&61&&&&&248&&-95&94&-159&158&-249\\
\hline
-255&-190&191&-126&127&-62&63&&&&&254&&-93&92&-157&156\\
\hline
154&-227&-192&193&-128&129&-64&65&&&&&226&&-91&90&-155\\
\hline
-187&186&-233&-194&195&-130&131&-66&67&&&&&232&&-89&88\\
\hline
86&-185&184&-239&-196&197&-132&133&-68&69&&&&&238&&-87\\
\hline
-119&118&-183&182&-245&-198&199&-134&135&-70&71&&&&&244&\\
\hline
&-117&116&-181&180&-251&-200&201&-136&137&-72&73&&&&&250\\
\hline
222&&-115&114&-179&178&-223&-202&203&-138&139&-74&75&&&&\\
\hline
&228&&-113&112&-177&176&-229&-204&205&-140&141&-76&77&&&\\
\hline
&&234&&-111&110&-175&174&-235&-206&207&-142&143&-78&79&&\\
\hline
&&&240&&-109&108&-173&172&-241&-208&209&-144&145&-80&81&\\
\hline
&&&&246&&-107&106&-171&170&-247&-210&211&-146&147&-82&83\\
\hline
\end{tabular}
\end{center}
\end{scriptsize}

\end{example}

To prove that the array is globally simple we must verify that all sequential partial sums are distinct. For the above example we give the row and column partial sums in the Appendix, where we have listed the row (column) number and each partial sum beginning  with the entry in diagonal $D_0$.
These partial sums are considered modulo $2(4p+\gamma)n+1=511$ so it is important to check carefully when the absolute value of the partial sums exceeds $255$.

\subsection{Row sums and column sums}\label{rowcolsums}
For a given row $a\in [n]$ and all $i\in I$, there exists $x_1,x_2\in [n]$ such that $a=2i-x_1\md{n}$ and $a=2i+1+x_2\md{n}$. Thus  $x_1+x_2+1=0\md{n}$  and so $x_1+x_2=n-1.$ Consequently for all $ i\in I$,
\begin{eqnarray}
d_{2i}(r_a)+d_{2i+1}(r_a)&=&(\gamma+2)n+4in-2x_1-\gamma n-4in-1-2x_2\nonumber\\
&=&2n-1-2(n-1)=1.\label{RowiSum}
\end{eqnarray}

 Similarly, for a given row $a$ and for all $j\in J$, there exists $  x_1,x_2\in [n]$ such that  $a=2p+1+2j-x_1\md{n}$ and $a=2p+2+2j+x_2\md{n}$, implying $x_1+x_2+1=0\md{n}$, and so $x_1+x_2=n-1$. Consequently for all $j\in J$,
 \begin{eqnarray}
 d_{2p+2j+1}(r_a)+d_{2p+2j+2}(r_a)&=&(4p+\gamma-6-4j)n+1+2x_1-(4p+\gamma-4+4j)n+2x_2\nonumber\\
 &=&-1.\label{RowjSum}
 \end{eqnarray}

  Finally for $ x_1,x_2\in [n]$,  $a=2p-\alpha x_1\md{n}$ and $a=2p+\alpha+\alpha x_2\md{n}$ implies that $\alpha(x_1+x_2+1)=0\md{n}$. Since gcd$(\alpha,n)=1$, $x_1+x_2+1=0\md{n}$ and again $x_1+x_2=n-1$. Hence
  \begin{eqnarray}
d_{2p}(r_a)+d_{2p+\alpha}(r_a)&=&  -(4p+\gamma)n+2x_1+(4p+\gamma-2)n+1+2x_2\nonumber\\
&=&-1.\label{RowAlphaSum}
 \end{eqnarray}

Therefore as required, the sum of the entries in row $a$ of $A$ is $(1\times p) +(-1\times(p-1))-1=0$.

 In column $a=0$ the sum of the entries is
\begin{eqnarray}\label{col0sum}
&&\displaystyle (d_{2p}(c_0)+d_{2p+\alpha}(c_0))+\sum_{i=0}^{p-1} \left(d_{2i}(c_0)+d_{2i+1}(c_0)\right)+\sum_{j=0}^{p-2} \left(d_{2p+2j+1}(c_0)+d_{2p+2j+2}(c_0)\right)\nonumber \\
&=&\displaystyle -(2n-1)+\sum_{i=0}^{p-1} (2n-1)+\sum_{j=0}^{p-2} -(2n-1)=0.
\end{eqnarray}

For a given column $a\neq 0$, there exists $ x_1,x_2\in [n]$ such that $a=-x_1\md{n}$ and $a=x_2\md{n}$ or equivalently $x_1=n-a$ and $x_2=a$. Thus for all $i\in I$ and for all $j\in J$
\begin{eqnarray}
d_{2i}(c_a)+d_{2i+1}(c_a)&=&(\gamma+2)n+4in-2(n-a)-(\gamma n+4in+1)-2a=-1,\label{ColiSum}\\
d_{2p+2j+1}(c_a)+d_{2p+2j+2}(c_a)&=&(4p+\gamma-6-4j)n+1+2(n-a)-(4p+\gamma-4-4j)n+2a\nonumber\\
&=&1.\label{ColjSum}
\end{eqnarray}

 Furthermore setting $a=-\alpha x_1\md{n}$ and $a=\alpha x_2\md{n}$ we see that $0=\alpha(x_1+x_2)\md{n}$. Now since gcd$(\alpha,n)=1$, $x_1+x_2=0\md{n}$ and so $x_1+x_2=n$. Hence
 \begin{eqnarray}\label{csum-2p+alpha}
d_{2p}(c_a)+d_{2p+\alpha}(c_a)= -(4p+\gamma)n+2x_1+(4p+\gamma-2)n+1+2x_2=1.
\end{eqnarray}

Hence, the sum of the entries in column $a\neq 0$ of $A$ is $(-1\times p)+(1\times(p-1))+1=0$.

\subsection{Distinct partial sums for rows}\label{partrowsum}
For a given row $a$ we will calculate  $\Sigma(x)=\sum_{i=0}^x d_i(r_a)$, for each $x\in \mathbb{D}$, and show that $\Sigma(x_1)\neq \Sigma(x_2)\md{2(4p+\gamma)+1}$ for each $x_1,x_2\in \mathbb{D}$ (Note that these sums cover the entries of the non-empty diagonals).

 Recall that Equations \eqref{RowiSum} and \eqref{RowjSum}  give $d_{2i}(r_a)+d_{2i+1}(r_a)=1$ and $d_{2p+2j+1}(r_a)+d_{2p+2j+2}(r_a)=-1$, for all $i\in I$ and for all $j\in J$.
Then using the definition of the array $A$ we may  evaluate and determine bounds for $\Sigma(x)$ as follows.
  \begin{eqnarray*}
        \gamma n+1<& \Sigma(2i)=d_{2i}(r_a)+i&< (4p+\gamma-2)n+p,\\
         0<&\Sigma(2i+1)=i+1       &< p+1,\\
        -(4p+\gamma)n<& \Sigma(2p)=d_{2p}(r_a)+p &< -(4p+\gamma-2)n+p-1,\\
     \Sigma(2p)<&\Sigma(2p+2j+1)=d_{2p}(r_a)+p + d_{2p+2j+1}(r_a)-j&<0,\\
     -(4p+\gamma)n<&\Sigma(2p+2j+2)=d_{2p}(r_a)+   p- (j+1)&<\Sigma(2p),\\
       &\Sigma(2p+\alpha)= 0.&	
  \end{eqnarray*}

Also, for all $i^\prime\in I\setminus\{p-1\}$ and for all $j^\prime\in J\setminus\{p-2\}$, $$\Sigma(2(i^\prime+1))-\Sigma(2i^\prime)=d_{2i^\prime+2}(r_a)+i^\prime+1-d_{2i^\prime}(r_a)-i^\prime$$ $$=4n+1-2(2i^\prime+2-a\md{n})+2(2i^\prime-a\md{n})\geq 4n-3 \mbox{\rm \quad  and}$$
$$\Sigma(2p+2(j^\prime+1)+1)-\Sigma(2p+2j^\prime+1)=d_{2p+2j^\prime+3}(r_a)-(j^\prime+1)-d_{2p+2j^\prime+1}(r_a)+j^\prime$$ $$=-4n-1+2(2p+2j^\prime+3-a\md{n})-2(2p+2j^\prime+1-a\md{n})\leq -4n+3.$$ Thus  for $i\in I$ and $j\in J$, the function $f(i)=\Sigma(2i)$ is  strictly increasing and the function $g(j)=\Sigma(2p+2j+1)$ is strictly decreasing.

Hence\begin{eqnarray}
&&\Sigma(4p-2)<\Sigma(4p-4)<\dots<\Sigma(2p+2)<\Sigma(2p)<-(4p+\gamma-3)n, \nonumber\\
&&\Sigma(2p)<\Sigma(4p-3)<\Sigma(4p-5)<\dots<\Sigma(2p+1)<0,\label{Rowparsum4p}\\
&&0< \Sigma(1) <\Sigma(3)<\dots<\Sigma(2p-1)<p+1 <\gamma n < \Sigma(0)<\Sigma(2)<\dots<\Sigma(2p-2).  \nonumber
  \end{eqnarray}

 Furthermore, for all $x\in \mathbb{D}$, the row partial sums $|\Sigma(x)|\leq (4p+\gamma)n$, and  so by Remark (\ref{mods}) $\Sigma(x)\equiv \Sigma(y)\md{2(4p+\gamma)n+1}$ if and only if $\Sigma(x)=\Sigma(y)$. Hence  for all $i\in I$ and  for all $j\in J$ the partial sums calculated on
row $a$ are all distinct modulo $2(4p+\gamma)n+1$.

\subsection{Distinct partial sums for non-zero columns}\label{partcolsum}

Similarly to above, we calculate  $\overline{\Sigma}(x)=\sum_{i=0}^x d_i(c_a)$, for each $ x\in \mathbb{D}$ and show that $\overline{\Sigma}(x_1)\neq \overline{\Sigma}(x_2)\md{2(4p+\gamma)+1}$ for each $x_1,x_2\in \mathbb{D}$.

Equations \eqref{ColiSum} and \eqref{ColjSum} imply that $d_{2i}(c_a)+d_{2i+1}(c_a)=-1$ and $d_{2p+2j+1}(c_a)+d_{2p+2j+2}(c_a)=1$, for all $i\in I$ and for all $j\in J$. Then using the definition of the array $A$ we may evaluate and determine bounds for $\overline{\Sigma}(x)$ as follows.
 \begin{eqnarray*}
     \gamma n<&\overline{\Sigma}(2i)=d_{2i}(c_a)-i&<(4p+\gamma-2)n, \\
   -p\leq&  \overline{\Sigma}(2i+1)=-(i+1)&< 0,\\
 -(4p+\gamma)n-p\leq& \overline{\Sigma}(2p)=d_{2p}(c_a)-p &\leq-(4p+\gamma-2)n-p-2,\\
 -(4p-2)n<& \overline{\Sigma}(2p+2j+1)=&\\
 &d_{2p}(c_a)-p+d_{2p+2j+1}(c_a)+j&<-2n-p,\\
  \overline{\Sigma}(2p)<&\overline{\Sigma}(2p+2j+2)=d_{2p}(c_a)-p+(j+1)&<-(4p+\gamma-2)n,\\
  &\overline{\Sigma}(2p+\alpha)= 0.&	
  \end{eqnarray*}
Furthermore, for all $i^\prime\in I\setminus\{p-1\}$ and for all $j^\prime\in J\setminus\{p-2\}$, $$\overline{\Sigma}(2(i^\prime+1))-\overline{\Sigma}(2i^\prime)=d_{2i^\prime+2}(c_a)-(i^\prime+1)-d_{2i^\prime}(c_a)+i^\prime=
4n-1\mbox{\rm \quad  and}$$  $$\overline{\Sigma}(2p+2(j^\prime+1)+1)-\overline{\Sigma}(2p+2j^\prime+1)=d_{2p+2j^\prime+3}(c_a)+j^\prime+1-d_{2p+2j^\prime+1}(c_a)-j^\prime=-4n+1.$$ Thus for $i\in I$ and $j\in J$ the function $f(i)=\overline{\Sigma}(2i)$ is  strictly increasing  and $g(j)=\overline{\Sigma}(2p+2j+1)$ is strictly decreasing.

  Hence
  \begin{eqnarray}
  &&-(4p+\gamma+1)n<\overline{\Sigma}(2p)<\overline{\Sigma}(2p+2)<\dots<\overline{\Sigma}(4p-2)<\overline{\Sigma}(4p-3)\nonumber\\
  &&\overline{\Sigma}(4p-3) <\overline{\Sigma}(4p-5)<\dots<\overline{\Sigma}(2p+1)<-n<\nonumber\\
  &&\overline{\Sigma}(2p-1)<\dots<\overline{\Sigma}(3)<\overline{\Sigma}(1)<0<\gamma n < \overline{\Sigma}(0)<\overline{\Sigma}(2)<\dots<\overline{\Sigma}(2p-2).\label{Colparsum4p}
  \end{eqnarray}

 Thus for column $a\neq 0$ and  each $x\in \mathbb{D}$, the partial  sum   $|\overline{\Sigma}(x)|< (4p+\gamma+1)n$. Further for $x\in \mathbb{D}\setminus F$, where $F=\{2p,2p+2,\dots,4p-2\}$, $|\overline{\Sigma}(x)|\leq(4p+\gamma-2)n<(4p+\gamma)n$. Thus
 for all $x,y\in \mathbb{D}\setminus F$, $\overline{\Sigma}(x)\equiv \overline{\Sigma}(y)\md{2(4p+\gamma)n+1}$ if and only if $\overline{\Sigma}(x)=\overline{\Sigma}(y)$ by (\ref{mods}).
 Furthermore, for all $x\in F$, $2(4p+\gamma)n+1+\overline{\Sigma}(x)>(4p+\gamma)n-p+1>(4p+\gamma-2)n>\overline{\Sigma}(y)$ for all $y\in \mathbb{D}$. Hence, in column $a\neq 0$   the partial  sums calculated on
diagonals $D_{x}$, $x\in {\mathbb D}$, are distinct modulo $2(4p+\gamma)n+1$.

\subsection{Distinct partial sums for column zero}\label{partcolsum0}

From Section \ref{definition}, for $i\in I$ and $j\in J$ the entries in column $0$ are
\begin{eqnarray*}
d_{2i}(c_0)			&=&			\gamma n+4in+2n,\\
d_{2i+1}(c_0)		&=&			-\gamma n-4in-1,\\
d_{2p}(c_0)			&=&			-4pn-\gamma n,\\
d_{2p+1+2j}(c_0)	&=&			4pn+\gamma n-6n-4jn+1,\\
d_{2p+2+2j}(c_0)	&=&			-4pn-\gamma n+4n+4jn,\\
d_{2p+\alpha}(c_0)	&=&			4pn+\gamma n-2n+1,
\end{eqnarray*}
 and thus $d_{2i}(c_0)+d_{2i+1}(c_0)=2n-1$ and $d_{2p+2j+1}(c_0)+d_{2p+2j+2}(c_0)=-(2n-1)$. Thus, for $i\in I$ and $j\in J$,  the partial sums may be calculated and bounded as follows.
 \begin{eqnarray*}
  \overline{\Sigma}(2i)&=&(\gamma+2+4i)n+(2n-1)i=(\gamma+2+6i)n-i\\
  &\Rightarrow&0< \overline{\Sigma}(0)< \overline{\Sigma}(2)<\dots<\overline{\Sigma}(2p-2)<2(4p+\gamma)n+1,\\
  \overline{\Sigma}(2i+1)&=&(2n-1)(i+1)=2(i+1)n-(i+1)\\
  &\Rightarrow&0<2n-1=\overline{\Sigma}(1)<\overline{\Sigma}(3)<\dots <\overline{\Sigma}(2p-1)=(2n-1)p,\\
  \overline{\Sigma}(2p)&=&(2n-1)p-(4p+\gamma)n=-(2p+\gamma)n-p\\
  &\Rightarrow& \overline{\Sigma}(2p)<0,  \\
 \overline{\Sigma}(2p+2j+1)&=&(p-j)(2n-1)-(4p+\gamma)n +(4p+\gamma-6)n-4jn+1\\
  &=&(2p-6j-6)n-(p-j)+1\\
   &\Rightarrow& -(4p-6)n-1=\overline{\Sigma}(4p-3)<\dots< \overline{\Sigma}(2p+1) <2pn,\\
\overline{\Sigma}(2p+2j+2)&=& (p-j-1)(2n-1)-(4p+\gamma)n=  -(2p+2j+\gamma+2)n-(p-j-1) \\
&\Rightarrow&-(4p+\gamma-2)n-1= \overline{\Sigma}(4p-2)<\dots<\overline{\Sigma}(2p+2)<\overline{\Sigma}(2p),\\
  \overline{\Sigma}(2p+\alpha)&=&0.
  \end{eqnarray*}

Note that for all $x\in \mathbb{D}$ and $y\in \mathbb{D}\setminus 2I$:
\begin{eqnarray}
&& \overline{\Sigma}(x)=\beta n+\epsilon\mbox{\rm \ for some\ } \beta,\epsilon\in \mathbb{Z}\mbox{\ with\ }-\frac{n}{4}\leq-p\leq\epsilon\leq p\leq \frac{n}{4},\label{star4}\\
&& |\overline{\Sigma}(x)|\leq 2(4p+\gamma)n+1, \label{star1}\\
&& |\overline{\Sigma}(y)|\leq (4p+\gamma)n. \label{star3a}
\end{eqnarray}

We will proceed by checking a number of cases individually. In what follows we will make extensive use of (\ref{star4}) and (\ref{n}).

For all $ i_1,i_2,i\in I$ and $ j_1,j_2,j\in J$:
\begin{itemize}

\item[1(i)] Suppose that $\overline{\Sigma}(2i_1)=\overline{\Sigma}(2i_2+1)\md{2(4p+\gamma )n+1}$. Then  (\ref{modl}) and (\ref{star1}) implies
    $(2+\gamma+6i_1)n-i_1= 2(i_2+1)n-(i_2+1)$. Hence $i_1=i_2+1$ and $2+\gamma+6i_1=2i_2+2$. But then $\gamma=-4i_2-6$. This  contradicts $\gamma>0$.
\item[1(ii)] Suppose that $\overline{\Sigma}(2i)=\overline{\Sigma}(2p)\md{2(4p+\gamma )n+1}$. Then by (\ref{mod=})
\begin{eqnarray*}
(2+\gamma+6i)n-i&=&2(4p+\gamma)n+1-(2p+\gamma)n-p\\
\Rightarrow \quad (2+\gamma+6i)n-i&=&(6p+\gamma)n+1-p.
\end{eqnarray*} This implies $i=p-1$ and $2+6i=6p$ leading to the contradiction $-4=0.$

\item[1(iii)] Suppose that $\overline{\Sigma}(2i)=\overline{\Sigma}(2p+2j+1)\md{2(4p+\gamma )n+1}$. Then
\begin{eqnarray*}
(2+\gamma+6i)n-i&=& (2p-6j-6)n-(p-j)+1 \mbox{ or} \\
- 2(4p+\gamma)n-1+(2+\gamma+6i)n-i&=& (2p-6j-6)n-(p-j)+1.
 \end{eqnarray*} The former case implies $i=p-j-1$ and so $2+\gamma+6p-6j-6=2p-6j-6$ but then $\gamma=-4p-2$ which contradicts $\gamma\geq 0$. In the latter case we have $(-8p-\gamma+6i+2)n-(i+1)=(2p-6j-6)n-(p-j)+1$, which implies $p-2=i+j$ and so $\gamma=-10p+6(i+j)+8=-10p+6p-12+8=-4p-4$,  a contradiction.

\item[1(iv)] Suppose that $\overline{\Sigma}(2i)=\overline{\Sigma}(2p+2j+2)\md{2(4p+\gamma )n+1}$. Then by (\ref{mod=})
\begin{eqnarray*}
(2+\gamma+6i)n-i&=& 2(4p+\gamma)n+1 -(2p+2j+\gamma+2)n-(p-j-1)\\
\Rightarrow \quad (2+\gamma+6i)n-i&=&(6p+\gamma-2j-2)n-(p-j-2).
\end{eqnarray*}
 This implies  $i=p-j-2$ and $2+6i=6p-2j-2$ or equivalently $2+6(p-j-2)=6p-6j-2$ and so  $-10=-2$, a contradiction.
 \\ \\
 \noindent  For the remaining cases we will use (\ref{mods}) together with (\ref{star3a}).
\\
\item[2(i)] $\overline{\Sigma}(2i+1)>0$ and $\overline{\Sigma}(2p),\overline{\Sigma}(2p+2j+2)<0$ and so  we have $\overline{\Sigma}(2i+1)\not\equiv\overline{\Sigma}(2p)$ $\md{2(4p+\gamma )n+1}$ and $\overline{\Sigma}(2i+1)\not\equiv \overline{\Sigma}(2p+2j+2)\md{2(4p+\gamma )n+1}$.

\item[2(ii)]
 Suppose that $\overline{\Sigma}(2i+1)\equiv \overline{\Sigma}(2p+2j+1)\md{2(4p+\gamma )n+1}$.  Then we have
 $(2p-6j-6)n-(p-j)+1=(2i+2)n-(i+1)$. So $p=j+i+2$ and $2p-6j-2i-8=0$ which implies $2(i+j+2)-6j-2i-8=0$ and then $j=-1$, a contradiction.

\item[3(i)]  Suppose that $\overline{\Sigma}(2p)\equiv \overline{\Sigma}(2p+2j+1)\md{2(4p+\gamma )n+1}$. Then
$-(2p+\gamma)n-p= (2p-6j-6)n-(p-j)+1$. So $-p=-p+j+1$ but then $j=-1$.

\item[3(ii)] Suppose that $\overline{\Sigma}(2p)\equiv \overline{\Sigma}(2p+2j+2)\md{2(4p+\gamma )n+1}$. Then
$-(2p+\gamma)n-p= -(2p+2j+\gamma+2)n-(p-j-1)$. So $p=p-j-1$ but then  $j=-1$.


\item[4(i)] Suppose that $\overline{\Sigma}(2p+2j_1+1)\equiv \overline{\Sigma}(2p+2j_2+2)\md{2(4p+\gamma )n+1}$. Then
    $(2p-6j_1-6)n-(p-j_1)+1=-(2p+2j_2+\gamma+2)n-(p-j_2-1)$. So $-(p-j_1)+1=-p+j_2+1$ and $4p-6j_1+2j_2-4+\gamma=0$. Then $j_1=j_2$ and $\gamma=-4(p-j_1-1)<0$.

\item[4(ii)] Suppose that $\overline{\Sigma}(2p+2j+1)\equiv 0\md{2(4p+\gamma)n+1}$ then $-p+j+1=0$ which implies $j=p-1>p-2$, a contradiction.

    \end{itemize}

Hence for column $0$ all the partial sums are distinct modulo $2(4p+\gamma)n+1$. This proves Theorem \ref{main}.

\section{Globally simple Heffter arrays $H(n;4p+3)$}

In this section we will merge a Heffter array $H(n;3)$ with the support shifted Heffter array $H(n;4p,3)$ given by Theorem
 \ref{main}
 to obtain a globally simple Heffter array $H(n;4p+3)$. First we need a suitable $H(n;3)$.

\begin{theorem} {\rm \cite{ADDY}} Let $n\equiv 0,1\md{4}$. Then there exists a Heffter array $H(n;3)$ that has the following properties: non-empty cells are only on diagonals $D_0$, $D_{n-1}$ and $D_{1}$; $s_{L'}(D_0)=\{1,\dots,n\}$; entries of $L'$ on $D_{n-1}$ are all positive and entries of $L'$ on $D_{1}$ are all negative.
\label{addict}
\end{theorem}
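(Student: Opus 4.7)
I would construct $L'$ explicitly by prescribing its entries on the three permitted diagonals. Write $a_i$ for the entry in cell $(i,i) \in D_0$, $b_i$ for the entry in cell $(i-1, i) \in D_{n-1}$, and $c_i$ for the entry in cell $(i+1, i) \in D_1$. Then column $i$ contains $\{a_i, b_i, c_i\}$ and row $r$ contains $\{a_r, b_{r+1}, c_{r-1}\}$. Because $|a_i| \leq n$ while $b_i > 0 > c_i$ with $|b_i|, |c_i| \leq 3n$ forces $|b_i + c_i| \leq 2n-1$, every row and column sum has absolute value at most $3n - 1 < 6n+1$, so the Heffter sum condition reduces to an exact equality in $\mathbb{Z}$. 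Subtracting the column-$i$ equation $a_i + b_i + c_i = 0$ from the row-$i$ equation $a_i + b_{i+1} + c_{i-1} = 0$ gives $b_{i+1} - b_i = c_i - c_{i-1}$, so $K := b_i - c_{i-1}$ is independent of $i$. The whole array is then parametrised by the cyclic sequence $(b_i)_{i \in [n]}$ and the constant $K$, via $c_i = b_{i+1} - K$ and $a_i = K - b_i - b_{i+1}$.

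Next I would take $K = 4n+1$, which makes the sign and magnitude conditions nearly automatic. The map $\sigma \colon x \mapsto 4n+1 - x$ is a fixed-point-free involution on $\{n+1, \ldots, 3n\}$ with $n$ orbits. If every $b_i$ lies in $\{n+1, \ldots, 3n\}$, then $-c_i = \sigma(b_{i+1}) \in \{n+1, \ldots, 3n\}$, giving $b_i > 0$, $c_i < 0$, and the required magnitudes on $D_{n-1}$ and $D_1$. The support requirement $s_{L'}(D_0) = \{1, \ldots, n\}$ together with the Heffter partition condition then translates into two combinatorial constraints on $(b_i)$: (i) $\{b_0, \ldots, b_{n-1}\}$ is a transversal of the $\sigma$-orbits; and (ii) the multiset $\{|K - b_i - b_{i+1}| : i \in [n]\}$ equals $\{1, \ldots, n\}$.

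The main task, and the main obstacle, is to exhibit for each $n \equiv 0, 1 \pmod 4$ a cyclic sequence $(b_i)$ realising (i) and (ii) simultaneously. I would look for $(b_i)$ of Skolem/Langford type: interleave an increasing run through part of $\{n+1, \ldots, 2n\}$ with a decreasing run through part of $\{2n+1, \ldots, 3n\}$, chosen so that the consecutive sums $b_i + b_{i+1}$ sweep $\{4n+1 \pm j : 1 \leq j \leq n\}$ exactly once. The parity of $n \bmod 4$ controls how the two runs close up cyclically, which is precisely why the congruence hypothesis on $n$ appears, and a small junction adjustment would be needed in each of the two residue classes. All other conditions then come for free: the sign prescriptions on $D_{n-1}$ and $D_1$ are built into the choice of $K$, and simplicity is automatic for $k = 3$, since the three nonzero entries in any row or column yield partial sums $a$, $a+b = -c$, $0$, which are manifestly distinct in $\mathbb{Z}_{6n+1}$.
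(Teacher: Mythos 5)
This statement is not proved in the paper at all: it is imported verbatim from \cite{ADDY}, so there is no internal proof to compare against, and your attempt has to stand on its own as a reconstruction of that external result.

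Your structural reduction is correct and quite clean. The bound $|a_i+b_i+c_i|<6n+1$ does force the row and column sums to vanish over $\mathbb{Z}$, the telescoping argument correctly shows $K=b_i-c_{i-1}$ is constant, and with $K=4n+1$ the sign and support conditions on $D_{n-1}$ and $D_1$ reduce exactly to your conditions (i) and (ii) on the cyclic sequence $(b_i)$. A reassuring sanity check you did not record: $\sum_i a_i = nK-2\sum_i b_i \equiv n \pmod 2$ must match $\sum_{j=1}^n \pm j \equiv n(n+1)/2 \pmod 2$, and these parities agree precisely when $n\equiv 0,1\pmod 4$, so your framework reproduces the congruence hypothesis as an obstruction. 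The framework is also nonvacuous; e.g.\ for $n=4$ the cyclic sequence $(b_i)=(12,9,11,7)$ gives consecutive sums $21,20,18,19$, hence $|a_i|\in\{4,3,1,2\}$, and one checks the resulting array is a valid $H(4;3)$ of the required shape.

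The genuine gap is that the last step --- exhibiting, for every $n\equiv 0,1\pmod 4$, a transversal of the $\sigma$-orbits admitting a cyclic ordering whose consecutive sums realise $\{4n+1\pm j\}$ with each $j\in\{1,\dots,n\}$ hit exactly once --- is asserted rather than carried out. ``Interleave an increasing run with a decreasing run, with a small junction adjustment in each residue class'' is not a construction: this Skolem/Langford-type existence problem \emph{is} the theorem, merely reparametrised, and it is exactly the part that \cite{ADDY} resolves with explicit case-by-case formulas and verification. Until you write down the sequences $(b_i)$ for both residue classes and verify (i) and (ii) for them, you have reformulated the statement, not proved it. (A minor side remark: your closing point about simplicity is true but irrelevant here --- the theorem as stated asks only for the diagonal support and sign pattern, and simplicity of any $H(n;3)$ is automatic, as the paper notes for $s,t\leq 5$.)
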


\begin{theorem} \label{Ladder}
Let $n\equiv 1\md{4}$. Then for each $0\leq \beta\leq n-5$ there exists a Heffter array $H(n;3)$, denoted by $L$, with the following properties
\begin{itemize}
\item The non empty cells are exactly on the diagonals $D_{\beta}$, $D_{\beta+2}$ and $D_{\beta+4}$,
\item $s(D_{\beta+2})=\{1,\dots,n\}$,
\item $s(D_\beta\cup D_{\beta+4})=\{n+1,\dots,3n\}$,
\item entries on $D_{\beta}$ are all positive,
\item entries on $D_{\beta+4}$ are all negative,
\item the array defined by $M=[M(i,j)]$ where $M(i,j)=L(i+1,j+1)$, $i,j\in [n]$ retains the above properties.
\end{itemize}
\end{theorem}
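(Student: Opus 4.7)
My plan is to construct $L$ from the Heffter array supplied by Theorem \ref{addict} via two elementary transformations: a cyclic row shift, and a ``dilation'' of the index set by $2^{-1} \md{n}$. The dilation is well-defined because $n \equiv 1 \md{4}$ forces $n$ to be odd, so $2$ is a unit in $\mathbb{Z}_n$. The key observation is that multiplying both row and column indices by a fixed unit $\lambda$ in $\mathbb{Z}_n$ sends a Heffter array with filled diagonals $\{d_1, d_2, d_3\}$ to one with filled diagonals $\{\lambda^{-1} d_1, \lambda^{-1} d_2, \lambda^{-1} d_3\}$, while preserving row and column sums, the support, and the multiset of entries on each individual diagonal.

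Concretely, let $L'$ be the array from Theorem \ref{addict}, with filled diagonals $D_0, D_{n-1}, D_1$, $s_{L'}(D_0) = \{1,\dots,n\}$, $D_{n-1}$ all positive and $D_1$ all negative. First, shift by one row: set $L''(i,j) = L'(i-1,j)$, with indices modulo $n$. A cell $(i,j)$ is filled in $L''$ iff $(i-1) - j \in \{n-1,0,1\} \md{n}$, i.e., iff $i - j \in \{0,1,2\} \md{n}$, so the filled diagonals of $L''$ are $D_0, D_1, D_2$. Row and column sums are preserved (row $i$ of $L''$ is row $i-1$ of $L'$ and column $j$ of $L''$ is column $j$ of $L'$), as are the support and the sign patterns on the diagonals: $D_0$ is all positive, $D_1$ has support $\{1,\dots,n\}$, and $D_2$ is all negative.

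Second, apply the dilation with $\lambda^{-1} = 2$: define
\[
L(i,j) \;=\; L''\bigl( 2^{-1}(i - \beta),\ 2^{-1} j \bigr),
\]
with all indices read modulo $n$. Because $j \mapsto 2^{-1} j$ and $i \mapsto 2^{-1}(i-\beta)$ are both bijections of $\mathbb{Z}_n$, each row (resp.\ column) of $L$ is a reindexing of a row (resp.\ column) of $L''$, so it sums to $0$ modulo $6n+1$. A cell $(i,j)$ is filled in $L$ iff $2^{-1}(i-\beta) - 2^{-1} j \in \{0,1,2\}$, iff $i - j \in \{\beta, \beta+2, \beta+4\} \md{n}$. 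Parameterising $D_{\beta+2d}$ of $L$ by $u = 2^{-1}(i-\beta)$ shows that the entries on $D_{\beta+2d}$ of $L$ form the same multiset as those on $D_d$ of $L''$, giving $s_L(D_{\beta+2}) = \{1,\dots,n\}$, $D_\beta$ all positive, $D_{\beta+4}$ all negative, and hence $s_L(D_\beta \cup D_{\beta+4}) = \{n+1,\dots,3n\}$.

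For the $M$-property, observe that the translation $(i,j) \mapsto (i+1, j+1)$ preserves $i-j$, so it permutes cells within each diagonal of $L$. Therefore $M$ has the same filled diagonals as $L$, the same multiset of entries on each diagonal, and its rows (resp.\ columns) are merely reindexings of those of $L$, so all row and column sums, the support, and the sign patterns transfer unchanged. The only conceptual ingredient in the whole argument is the dilation trick that converts three consecutive diagonals into three diagonals spaced by $2$; every other step is routine verification, and no step presents a genuine obstacle.
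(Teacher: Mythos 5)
Your construction is correct and is essentially the paper's own proof: composing your row shift $L''(i,j)=L'(i-1,j)$ with the dilation $L(i,j)=L''\bigl(2^{-1}(i-\beta),2^{-1}j\bigr)$ gives exactly the paper's single reindexing $L(2(i+1)+\beta,2j)=L'(i,j)$, which is well-defined because $n$ is odd. The verification of the diagonal positions, supports, sign patterns, and the $M$-property matches the paper's argument.
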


\begin{proof}
Let $L'$ be a Heffter array $H(n;3)$ with the properties from Theorem \ref{addict} where $n\equiv 1\md{4}$.
  Now define $L(2(i+1)+\beta,2j)=L'(i,j)$ for all $i,j\in [n]$ where operations on coordinates are taken modulo $n$. As $n$ is odd, for any given $a,b\in [n]$ there exists unique $i,j\in [n]$ such that $2(i+1)+\beta\equiv a\md{n}$ and $2j\equiv b\md{n}$. Hence we may obtain $L$ by applying row and column permutations to $L'$. Therefore $L$ is also a Heffter array $H(n;3)$. Furthermore, the entries on $D_\beta$ of $L$ are exactly the entries on $D_{n-1}$ of $L'$; consequently
entries on $D_\beta$ of $L$ are all positive. Also the set of entries on $D_{\beta+2}$ of $L$ are exactly the set of entries on $D_{0}$ of $L'$; consequently $s_{L}(D_{\beta+2})=\{1,\dots,n\}$. Similarly the set of entries on $D_{\beta+4}$ of $L$ are exactly the set of entries on $D_{1}$ of $L'$; consequently entries on $D_\beta+4$ of $L$ are all negative.

Finally, it is clear that the array $M$ retains the above properties, since each diagonal retains the same set of symbols under this transformation.
\end{proof}
\begin{figure}
\begin{footnotesize}
\begin{tabular}{cc}
\begin{tabular}{|c|c|c|c|c|c|c|c|c|}
\hline
-8 &18 &   &   &   &   &   &  &-10\\\hline
-19&-7 &26 &   &   &   &   &  & \\\hline
   &-11&-6 &17 &   &   &   &  &\\\hline
   &   &-20&-5 &25 &   &   &  &\\\hline
   &   &   &-12&-9 &21 &   &  &\\\hline
   &   &   &   &-16&3  &13 &  &\\\hline
   &   &   &   &   &-24&2  &22&\\\hline
   &   &   &   &   &   &-15&1 &14\\\hline
27 &   &   &   &   &   &   &-23&-4\\ \hline
\end {tabular}
&
\begin{tabular}{|c|c|c|c|c|c|c|c|c|}
\hline
   &   &   &   &-20&   &-5 &  &25\\\hline
27 &   &   &   &   &-23&   &-4& \\\hline
   &21 &   &   &   &   &-12&  &-9\\\hline
-8 &   &18 &   &   &   &   & -10 &\\\hline
   &3  &   &13 &   &   &   &  &-16\\\hline
-19&   &-7 &   &26 &   &   &  &\\\hline
   &-24&   &2  &   &22 &   &  &\\\hline
   &   &-11&   &-6 &   &17 &  & \\\hline
   &   &   &-15&   &1  &   &14&\\\hline
\end{tabular}
\end{tabular}
\caption{The arrays $L$ and $L'$ from Theorem \ref{Ladder} with $\beta=1$.}
\end{footnotesize}
\end{figure}

By similar reasoning to the previous theorem, we also have the following.

\begin{corollary} \label{Ladder2}
Let $n\equiv 0\md{4}$ and gcd$(n,\epsilon)=1$. Then for each $0\leq \beta\leq n-2\epsilon-1$ there exists a Heffter array $H(n;3)$, denoted by $L$, with the following properties
\begin{itemize}
\item The non empty cells are exactly on the diagonals $D_{\beta}$, $D_{\beta+\epsilon}$ and $D_{\beta+2\epsilon}$,
\item $s(D_{\beta+2\epsilon})=\{1,\dots,n\}$ and $s(D_\beta\cup D_{\beta+4\epsilon})=\{n+1,\dots,3n\}$,
\item entries on $D_{\beta}$ are all positive,
\item entries on $D_{\beta+2\epsilon}$ are all negative,
\item the array defined by $M=[M(i,j)]$ where $M(i,j)=L(i+1,j+1)$, $i,j\in [n]$ retains the above properties.
\end{itemize}
\end{corollary}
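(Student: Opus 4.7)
The plan is to imitate the proof of Theorem \ref{Ladder}, replacing the multiplier $2$ (which worked there because $n$ is odd, so $\gcd(n,2)=1$) by the given $\epsilon$, which is assumed coprime to $n$. The only structural change is that the spacing between the three diagonals shifts from $2$ to $\epsilon$.

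First I would take $L'$ to be the Heffter array $H(n;3)$ guaranteed by Theorem \ref{addict} (which is available since $n\equiv 0\pmod 4$), supported on the diagonals $D_0,\,D_{n-1},\,D_1$, with $s(D_0)=\{1,\dots,n\}$, positive entries on $D_{n-1}$, and negative entries on $D_1$. Then I would define the new array by
\[
L\bigl(\epsilon(i+1)+\beta,\;\epsilon j\bigr) \;=\; L'(i,j) \qquad (i,j\in[n]),
\]
with all row and column indices taken modulo $n$. Because $\gcd(n,\epsilon)=1$, both maps $i\mapsto\epsilon(i+1)+\beta$ and $j\mapsto\epsilon j$ are bijections on $[n]$, so $L$ is obtained from $L'$ by a permutation of rows and a permutation of columns. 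Consequently $L$ is itself a Heffter array $H(n;3)$; the row sums, column sums, and support $\{1,\dots,3n\}$ are all inherited from $L'$.

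Next I would track where the diagonals of $L'$ are sent. If $(i,j)$ lies on $D_d$ in $L'$, so $i-j\equiv d\pmod n$, then
\[
\bigl(\epsilon(i+1)+\beta\bigr)-\epsilon j \;\equiv\; \epsilon d+\epsilon+\beta \pmod n.
\]
Substituting $d\in\{n-1,0,1\}$ yields the diagonals $D_\beta$, $D_{\beta+\epsilon}$ and $D_{\beta+2\epsilon}$ of $L$ respectively. The hypothesis $0\le\beta\le n-2\epsilon-1$ ensures these three indices lie in $\{0,\dots,n-1\}$ and are pairwise distinct. Since the construction relabels cells without altering their entries, the sign conditions (positive on $D_\beta$, negative on $D_{\beta+2\epsilon}$) and the support split (the middle diagonal $D_{\beta+\epsilon}$ receiving $\{1,\dots,n\}$ and the outer pair $D_\beta\cup D_{\beta+2\epsilon}$ receiving $\{n+1,\dots,3n\}$) transfer directly from the corresponding properties of $L'$.

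Finally, for the last bullet, the shift $(i,j)\mapsto(i+1,j+1)$ preserves the value $i-j\pmod n$, so each diagonal of $M$ contains exactly the same multiset of entries as the corresponding diagonal of $L$; a cyclic relabelling of rows and columns obviously preserves row sums, column sums, and the support, so $M$ inherits every listed property. I do not anticipate a substantial new obstacle: the argument parallels Theorem \ref{Ladder} almost verbatim. The one point requiring real care is the hypothesis $\gcd(n,\epsilon)=1$, which is what upgrades the defining substitution into a genuine row-and-column permutation; in Theorem \ref{Ladder} that role was filled automatically by the parity of $n$, while here it is imposed as an explicit coprimality assumption.
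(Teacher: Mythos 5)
Your proposal is correct and is essentially the paper's own argument: the paper proves this corollary only by the remark ``By similar reasoning to the previous theorem,'' and your write-up spells out exactly that reasoning, with the substitution $L(\epsilon(i+1)+\beta,\epsilon j)=L'(i,j)$ replacing the multiplier $2$ of Theorem \ref{Ladder} and the hypothesis $\gcd(n,\epsilon)=1$ supplying the bijectivity that oddness of $n$ gave before. You also correctly read the intended support split (middle diagonal $D_{\beta+\epsilon}$ carrying $\{1,\dots,n\}$, outer diagonals $D_\beta\cup D_{\beta+2\epsilon}$ carrying $\{n+1,\dots,3n\}$) through the evident typos in the corollary's statement.
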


\subsection{Globally simple $H(n;4p+3)$ when $n\equiv 1\md{4}$}

\begin{theorem}\label{4p+3}
Let $n\equiv 1\md{4}$, $p>0$ and $n\geq 4p+3$. Let $\alpha$ be an integer such that $2p+2\leq \alpha\leq n-2-2p$ and $\mbox{gcd}(n,\alpha)=1$.
Let $\beta=2p+\alpha-3$ and let $L$ be a Heffter array $H(n;3)$ based on $\beta$ satisfying the properties of Theorem
$\ref{Ladder}$. Then
the union of arrays $L$ and the support shifted Heffter array $H(n;4p,3)$ (given by Theorem $\ref{main}$) is a globally simple Heffter array $H(n;4p+3)$ where the entries are on the set of diagonals $D_i$ such that $i$ is in
${\mathbb D}=\{0,1,\dots ,4p-2,2p+\alpha\}\cup \{2p+\alpha-3,2p+\alpha-1,2p+\alpha+1\}$.
\end{theorem}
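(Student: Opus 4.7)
The plan is to verify $H := A \cup L$ is a Heffter array and then establish global simplicity. Since $\alpha \geq 2p+2$ gives $\beta = 2p+\alpha-3 \geq 4p-1$ and $\beta+4 \leq n-1$, the diagonal-index sets $\{0,\ldots,4p-2,2p+\alpha\}$ of $A$ and $\{\beta,\beta+2,\beta+4\}$ of $L$ are disjoint, so the filled cells of $A$ and $L$ do not collide. The supports $s(A) = \{3n+1,\ldots,(4p+3)n\}$ and $s(L) = \{1,\ldots,3n\}$ partition $\{1,\ldots,(4p+3)n\}$, and each row and column of $H$ has exactly $4p+3$ entries summing to zero.

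For global simplicity, in the natural ordering the non-empty diagonals of $H$ along each row or column are
\[
D_0,\, D_1,\, \ldots,\, D_{4p-2},\, D_\beta,\, D_{\beta+2},\, D_{2p+\alpha},\, D_{\beta+4}.
\]
Let $\Sigma$ denote the row partial sums of $A$ from Theorem \ref{main}, and let $\ell_\beta, \ell_{\beta+2}, \ell_{\beta+4}$ be the entries of $L$ in a given row $a$. The row partial sums $\tilde\Sigma$ of $H$ satisfy $\tilde\Sigma(x) = \Sigma(x)$ for $x \in \{0,\ldots,4p-2\}$, while
\begin{align*}
\tilde\Sigma(\beta) &= \Sigma(4p-2) + \ell_\beta, & \tilde\Sigma(\beta+2) &= \tilde\Sigma(\beta) + \ell_{\beta+2},\\
\tilde\Sigma(2p+\alpha) &= -\ell_{\beta+4}, & \tilde\Sigma(\beta+4) &= 0,
\end{align*}
using $\Sigma(2p+\alpha)=0$ and $\ell_\beta + \ell_{\beta+2} + \ell_{\beta+4} = 0$. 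By Theorem \ref{Ladder}, $\ell_\beta \in [n+1,3n]$, $\ell_{\beta+4} \in [-3n,-n-1]$, and $1 \leq |\ell_{\beta+2}| \leq n$; hence $\ell_\beta + \ell_{\beta+2} = -\ell_{\beta+4} \in [n+1,3n]$. Analogous formulas hold for column partial sums.

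By Theorem \ref{main}, the old partial sums $\tilde\Sigma(x) = \Sigma(x)$ for $x \in \{0,\ldots,4p-2\}$ are already distinct modulo $2(4p+3)n+1$. It remains to verify (i) pairwise distinctness of the four new partial sums and (ii) distinctness of each new partial sum from every $\Sigma(x)$, both modulo $2(4p+3)n+1$. For (i), sign analysis based on \eqref{Rowparsum4p} gives $\tilde\Sigma(\beta), \tilde\Sigma(\beta+2) < 0 < \tilde\Sigma(2p+\alpha)$ and $\tilde\Sigma(\beta+4) = 0$, while $\tilde\Sigma(\beta) - \tilde\Sigma(\beta+2) = -\ell_{\beta+2} \neq 0$. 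For (ii), the interval $[n+1, 3n]$ containing $\tilde\Sigma(2p+\alpha)$ misses every $\Sigma(x)$ by \eqref{Rowparsum4p} and $n \geq 4p+3$.

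The main obstacle is separating $\tilde\Sigma(\beta)$ and $\tilde\Sigma(\beta+2)$ from the negative old sums $\Sigma(2p+2j)$ and $\Sigma(2p+2j+1)$, which occupy overlapping ranges. The explicit formulas in Theorem \ref{main} give $|\Sigma(2p+2j) - \Sigma(4p-2)| \leq p$, whereas $\ell_\beta$ and $\ell_\beta + \ell_{\beta+2}$ both lie in $[n+1, 3n]$; since $n \geq 4p+3 > p$, no perturbation of $\Sigma(4p-2)$ by $\ell_\beta$ (or $\ell_\beta+\ell_{\beta+2}$) can hit any $\Sigma(2p+2j)$. For the odd-indexed $\Sigma(2p+2j+1)$, the formula yields $\Sigma(2p+2j+1) - \Sigma(4p-2) \geq 5n - O(p)$ for $j \leq p-2$, exceeding the maximum $3n$ attained by $\ell_\beta$ or $\ell_\beta + \ell_{\beta+2}$, so no collision arises. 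Since $|\tilde\Sigma(\cdot)| \leq (4p+3)n$, \eqref{mods} rules out modular wrap-around. The column analysis, including the specialised column-$0$ case of Subsection \ref{partcolsum0}, proceeds in parallel using \eqref{Colparsum4p} and the column formulas of Theorem \ref{main}.
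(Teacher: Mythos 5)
Your treatment of the rows and of the columns $a\neq 0$ is essentially the paper's own argument: the four new partial sums $\tilde\Sigma(\beta),\tilde\Sigma(\beta+2),\tilde\Sigma(2p+\alpha),\tilde\Sigma(\beta+4)$ are obtained by perturbing $\Sigma(4p-2)$ by $\ell_\beta$ and $-\ell_{\beta+4}\in[n+1,3n]$, they are sandwiched strictly between the even-indexed negative sums (all within $p$ of $\Sigma(4p-2)$) and the odd-indexed negative sums (all at least $5n$ above $\Sigma(4p-2)$), the value $-\ell_{\beta+4}\in[n+1,3n]$ misses the gap structure of (\ref{Rowparsum4p}) and (\ref{Colparsum4p}), and (\ref{mods}) converts integer distinctness into distinctness modulo $2(4p+3)n+1$. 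That part is correct and matches the paper.

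The gap is your last sentence: the column-$0$ case does \emph{not} proceed in parallel, and it is where the real difficulty of the theorem lies. In column $0$ the partial sums of $A$ have the form $\beta n+\epsilon$ with $|\epsilon|\leq p$ (Subsection \ref{partcolsum0}), and several of them exceed $(4p+3)n$ in absolute value --- already $\overline{\Sigma}(2p-2)=(6p-1)n-(p-1)>(4p+3)n$ for $p\geq 3$ --- so they wrap around modulo $2(4p+3)n+1$ and can coincide with the new negative sums $\overline{\Sigma}_B(2p+\alpha-3)=-(4p+1)n-1+L(2p+\alpha-3,0)$ and $\overline{\Sigma}_B(2p+\alpha-1)$; the sign-and-interval argument you use for rows is simply unavailable here. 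Worse, genuine collisions do occur for particular arrays $L$: for instance $\overline{\Sigma}(2i)\equiv\overline{\Sigma}_B(2p+\alpha-3)$ forces $L(2p+\alpha-3,0)=2n-(2p+1)/3$, $\overline{\Sigma}(2p+2j+1)\equiv\overline{\Sigma}_B(2p+\alpha)$ forces $-L(2p+\alpha+1,0)=2n-(2p+1)/3$, and other cases force the value $2n-1$. The theorem therefore only holds if one additionally arranges that $\{L(2p+\alpha-3,0),-L(2p+\alpha+1,0)\}\cap\{2n-1,\,2n-(2p+1)/3\}=\emptyset$, which the paper secures by invoking the final property of Theorem \ref{Ladder}: the cyclic shift $M(i,j)=L(i+1,j+1)$ preserves all other required properties, giving $n$ candidate placements of $L$ of which only a bounded number are bad, so for $n\geq 9$ a good one exists. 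Your proposal omits this idea entirely, and without it the union as you have defined it can fail to have distinct partial sums in column $0$.
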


\begin{proof}
First we will assume that there exists $\alpha$ coprime to $n$ such that $2p+2\leq \alpha\leq n-2-2p$. Next, construct the array $A$ as in Theorem \ref{main} with $\gamma=3$. Then we will merge this array with $L$ as constructed in Theorem \ref{Ladder} with $\beta=2p+\alpha-3$ to get a Heffter array $H(n;4p+3)$ that will be globally simple, which we denote by $B$. Note that since $n\equiv 1\md{4}$, such an $L$ exists.

Define

$B(i,j)= A(i,j)$ if $i-j\not\in\{2p+\alpha-3,2p+\alpha-1,2p+\alpha+1\},$

$B(i,j)= L(i,j)$ if $i-j\in\{2p+\alpha-3,2p+\alpha-1,2p+\alpha+1\}.$

Hence we are positioning diagonals $D_{2p+\alpha-3}$, $D_{2p+\alpha-1}$ and $D_{2p+\alpha+1}$ of $L$ to the empty diagonals $D_{2p+\alpha-3}$, $D_{2p+\alpha-1}$ and $D_{2p+\alpha+1}$ of $A$.
\begin{example}
The array $B$ (a Heffter array $H(17;15)$) when $n=17$, $p=3$ and $\alpha=8$.
\begin{scriptsize}
\begin{center}
\begin{tabular}{|c|c|c|c|c|c|c|c|c|c|c|c|c|c|c|c|c|}
\hline
85&&\textbf{-33}&244&\textbf{13}&&\textbf{20}&-105&104&-169&168&-245&-212&213&-148&149&-84\\
\hline
-52&53&&\textbf{-24}&240&\textbf{-4}&&\textbf{28}&-103&102&-167&166&-241&-214&215&-150&151\\
\hline
153&-54&55&&\textbf{-49}&236&\textbf{12}&&\textbf{37}&-101&100&-165&164&-237&-216&217&-152\\
\hline
-120&121&-56&57&&\textbf{-41}&232&\textbf{-3}&&\textbf{44}&-99&98&-163&162&-233&-218&219\\
\hline
221&-122&123&-58&59&&\textbf{-32}&228&\textbf{11}&&\textbf{21}&-97&96&-161&160&-229&-220\\
\hline
-188&189&-124&125&-60&61&&\textbf{-25}&224&\textbf{-2}&&\textbf{27}&-95&94&-159&158&-225\\
\hline
-255&-190&191&-126&127&-62&63&&\textbf{-48}&254&\textbf{10}&&\textbf{38}&-93&92&-157&156\\
\hline
154&-251&-192&193&-128&129&-64&65&&\textbf{-42}&250&\textbf{-1}&&\textbf{43}&-91&90&-155\\
\hline
-187&186&-247&-194&195&-130&131&-66&67&&\textbf{-31}&246&\textbf{9}&&\textbf{22}&-89&88\\
\hline
86&-185&184&-243&-196&197&-132&133&-68&69&&\textbf{-26}&242&\textbf{8}&&\textbf{26}&-87\\
\hline
-119&118&-183&182&-239&-198&199&-134&135&-70&71&&\textbf{-47}&238&\textbf{17}&&\textbf{30}\\
\hline
\textbf{35}&-117&116&-181&180&-235&-200&201&-136&137&-72&73&&\textbf{-51}&234&\textbf{16}&\\
\hline
&\textbf{46}&-115&114&-179&178&-231&-202&203&-138&139&-74&75&&\textbf{-39}&230&\textbf{-7}\\
\hline
\textbf{15}&&\textbf{19}&-113&112&-177&176&-227&-204&205&-140&141&-76&77&&\textbf{-34}&226\\
\hline
222&\textbf{-6}&&\textbf{29}&-111&110&-175&174&-223&-206&207&-142&143&-78&79&&\textbf{-23}\\
\hline
\textbf{-50}&252&\textbf{14}&&\textbf{36}&-109&108&-173&172&-253&-208&209&-144&145&-80&81&\\
\hline
&\textbf{-40}&248&\textbf{-5}&&\textbf{45}&-107&106&-171&170&-249&-210&211&-146&147&-82&83\\
\hline
\end{tabular}
\end{center}
\end{scriptsize}
\end{example}
We know that $s(A)=\{\gamma n+1,\dots, (4p+\gamma)n\}$, $s(L)=\{1,\dots,3n\}$ and $\gamma=3$ so $s(B)=\{1,\dots,(4p+3)n\}$. Also as row and column sums of both $A$ and $L$ are $0$, it is easy to see that the row and column sums of $B$ are $0$.

Now we just need to show that row partial sums and column partial sums of $B$ are distinct.

We will use the notation $\Sigma_A(x)$ and $\overline{\Sigma}_A(x)$ to denote the partial sum in the array $A$ as given in the previous section and; $\Sigma_{B}(x)$ and $\overline{\Sigma}_{B}(x)$ to denote the partial sum in the array $B$ as constructed here.
Firstly, $\Sigma_B(i)=\Sigma_A(i)$ and $\overline{\Sigma}_B(i)=\overline{\Sigma}_A(i)$  for all $1\leq i\leq 4p-2$ for all rows and columns of $B$ so row partial sums and column partial sums are distinct modulo $2(4p+3)n+1$ from diagonal $0$ to $4p-2$.

Consider row $a$.
First note that $L(a,a-2p-\alpha+3)+L(a,a-2p-\alpha+1)+L(a,a-2p-\alpha-1)=0$, $L(a,a-2p-\alpha+3)>0$ and
$L(a,a-2p-\alpha-1)<0$.
It was shown in Section \ref{partrowsum} that $\Sigma_B(4p-2)=d_{2p}(r_a)+1$.  Hence we have
\begin{eqnarray}
\Sigma_B(2p+\alpha-3)&=&d_{2p}(r_a)+1+L(a,a-2p-\alpha+3)<0,\nonumber\\
\Sigma_B(2p+\alpha-1)&=&d_{2p}(r_a)+1-L(a,a-2p-\alpha-1)<0,\nonumber\\
\Sigma_B(2p+\alpha)&=&-L(a,a-2p-\alpha-1)>0,\nonumber\\
\Sigma_B(2p+\alpha+1)&=&0.\nonumber
\end{eqnarray}
By Theorem \ref{Ladder}, it follows that \begin{eqnarray} n+1\leq\Sigma_B(2p+\alpha)\leq 3n \label{RangeL}\end{eqnarray}
 so by the inequality (\ref{Rowparsum4p}), $\Sigma_B(2p+\alpha)\neq \Sigma_B(i)$ for each $0\leq i\leq 4p-2$.

Now, from the definition of the array $A$, any entry in diagonal $D_{4p-3}$ is greater than $5n$. Thus, from Section 3.3 $\Sigma_B(4p-3)=d_{2p}(r_a)+d_{4p-3}(r_a)+2>d_{2p}(r_a)+5n$.
 So, \begin{eqnarray*}
\Sigma_B(2p) & = & d_{2p}(r_a)+p <d_{2p}(r_a)+n+2 \\
& \leq  & \Sigma_B(2p+\alpha-3), \Sigma_B(2p+\alpha-1)\leq d_{2p}(r_a)+1+3n< \Sigma_B(4p-3)
\end{eqnarray*} so by inequality (\ref{Rowparsum4p}) $\Sigma_B(2p+\alpha-3)\neq \Sigma_B(i)$ and $\Sigma_B(2p+\alpha-1)\neq \Sigma_B(i)$ for all $0\leq i\leq 4p-2$.

Next consider column $a\neq 0$. We have $L(2p+a+\alpha-3,a)+L(2p+a+\alpha-1,a)+L(2p+a+\alpha+1,a)=0$, with the first of these terms positive and the final term negative. Also it was shown in Section \ref{partcolsum} that $\overline{\Sigma}_B(4p-2)=d_{2p}(c_a)-1$. So
 \begin{eqnarray}
\overline{\Sigma}_B(2p+\alpha-3)&=&d_{2p}(c_a)-1+L(2p+a+\alpha-3,a)<0,\nonumber\\
\overline{\Sigma}_B(2p+\alpha-1)&=&d_{2p}(c_a)-1-L(2p+a+\alpha+1,a)<0,\nonumber\\
\overline{\Sigma}_B(2p+\alpha)&=&-L(2p+a+\alpha+1,a)>0,\nonumber\\
\overline{\Sigma}_B(2p+\alpha+1)&=&0.\nonumber
\end{eqnarray}

Similarly to (\ref{RangeL}),  \begin{eqnarray} n+1\leq\overline{\Sigma}_B(2p+\alpha)\leq 3n. \label{RangeLc}\end{eqnarray}
Thus for each column $a\neq 0$ and $1\leq i\leq 4p-2$ we have $\overline{\Sigma}_B(2p+\alpha)\neq \overline{\Sigma}_B(i)$ by inequalities (\ref{RangeLc}) and  (\ref{Colparsum4p}).

Also
\begin{eqnarray*}
\overline{\Sigma}_B(4p-2) & = & d_{2p}(c_a)-1 <d_{2p}(c_a)+n \\
& \leq & \overline{\Sigma}_B(2p+\alpha-3), \overline{\Sigma}_B(2p+\alpha-1)\leq d_{2p}(c_a)-1+3n<\overline{\Sigma}_B(4p-3)
\end{eqnarray*}
 so by inequality (\ref{Colparsum4p}) $\overline{\Sigma}_B(2p+\alpha-3)\neq \overline{\Sigma}_B(i)$ and $\overline{\Sigma}_B(2p+\alpha-1)\neq \overline{\Sigma}_B(i)$
 for all $0\leq i\leq 4p-1$.

Finally consider column $0$.

 By Theorem \ref{Ladder}, the Heffter array $L$ may be replaced by
 the array $M=[M(i,j)]$ where $M(i,j)=L(i+1,j+1)$, $i,j\in [n]$, while retaining the properties we have so far required. In effect, we may thus apply this transformation to the diagonals $D_{2p+\alpha-3}$, $D_{2p+\alpha-1}$ and $D_{2p+\alpha+1}$ without changing the rest of the array, and without changing the validity of the above arguments.
Since $n\geq 9$, we may thus assume that
 $\{L(2p+\alpha-3,0),-L(2p+\alpha+1,0)\}\cap\{2n-1,2n-(2p+1)/3\}=\emptyset$.
 By Section \ref{partcolsum0} we have:
\begin{eqnarray*}
\overline{\Sigma}_B(2i)&=&(5+6i)n-i,\nonumber\\
 \overline{\Sigma}_B(2i+1)&=&(2n-1)(i+1)=2(i+1)n-(i+1),\nonumber\\
 \overline{\Sigma}_B(2p)&=&-(2p+3)n-p<0,\nonumber\\
 \overline{\Sigma}_B(2p+2j+1)&=&(2p-6j-6)n-(p-j)+1,\nonumber\\
\overline{\Sigma}_B(2p+2j+2)&=& -(2p+2j+5)n-(p-j-1)<0, \nonumber\\
\overline{\Sigma}_B(2p+\alpha-3)&=&-(4p+1)n-1+L(2p+\alpha-3,0)<0,\nonumber\\
\overline{\Sigma}_B(2p+\alpha-1)&=&-(4p+1)n-1-L(2p+\alpha+1,0)<0,\nonumber\\
\overline{\Sigma}_B(2p+\alpha)&=&-L(2p+\alpha+1,0)>0,\nonumber\\
\overline{\Sigma}_B(2p+\alpha+1)&=&0\nonumber.
\end{eqnarray*}

\begin{itemize}
\item[1(i)]  $\overline{\Sigma}_B(2i)=(5+6i)n-i\geq 5n$ so by (\ref{star1}), (\ref{RangeLc}) and (\ref{modl}), $\overline{\Sigma}_B(2p+\alpha)\neq \overline{\Sigma}_B(2i)$ for all $i\in I$.
\item[1(ii)]  $\overline{\Sigma}_B(2i+1)=(2n-1)(i+1)$ then $\overline{\Sigma}_B(2p+\alpha)\neq \overline{\Sigma}_B(2i+1)\md{2(4p+3)n+1}$ by (\ref{star3a}),  (\ref{RangeLc}) and (\ref{mods}) as $-L(2p+\alpha+1,0)\neq 2n-1$.
\item[1(iii)]  $\overline{\Sigma}(2p),\overline{\Sigma}(2p+2j+2)<0$ hence  $\overline{\Sigma}(2p+\alpha)\neq \overline{\Sigma}(2p)\md{2(4p+3)n+1}$, $\overline{\Sigma}(2p+\alpha)\neq \overline{\Sigma}(2p+2j+2)\md{2(4p+3)n+1}$ for all $j\in J$ by (\ref{star3a}), (\ref{RangeLc}) and (\ref{star3a}).
\item[1(iv)]   Suppose that $\overline{\Sigma}(2p+2j+1)=\overline{\Sigma}(2p+\alpha)\md{2(4p+3)n+1}$. Then by (\ref{mods}) and (\ref{star3a}) $$(2p-6j-6)n-(p-j)+1=-L(2p+\alpha+1,0).$$ Now by (\ref{n}), ((\ref{star4}) and \ref{RangeLc}) $2p-6j-6=2$ which implies $p-4=3j$ hence $j=(p-4)/3$ and $-L(2p+\alpha+1,0)=2n-(2p+1)/3.$

\item[2(i)]  Suppose that $\overline{\Sigma}(2i)=\overline{\Sigma}(2p+\alpha-3)\md{2(4p+3)n+1}$ for some $i\in I$. Then
$\overline{\Sigma}(2i)=(5+6i)n-i=2(4p+3)n+1 -(4p+1)n-1+L(2p+\alpha-3,0)=2(4p+3)n+1+\overline{\Sigma}_B(2p+\alpha-3)$ hence
$ -(4p-6i)n-i=L(2p+\alpha-3,0)$ which implies $-(4p-6i)=2$ and so $L(2p+\alpha-3,0)=2n-(2p+1)/3.$

\item[2(ii)]  $\overline{\Sigma}(2i+1)>0$ for all $i\in [p]$ so $\overline{\Sigma}(2p+\alpha-1)\neq \Sigma(2i+1)\md{2(4p+3)n+1}$ and $\overline{\Sigma}(2p+\alpha-3)\neq \overline{\Sigma}(2i+1)\md{2(4p+3)n+1}$.

\item[2(iii)] Suppose that $\overline{\Sigma}(2p)\equiv\overline{\Sigma}(2p+\alpha-3)\md{2(4p+3)n+1}$ then  $-(4p+1)n-1+L(2p+\alpha-3,0)=-(2p+3)n-p$ so $L(2p+\alpha-3,0)=(2p-2)n-p+1$. Then $2p-2=2$ hence $p=2$ and $L(2p+\alpha-3,0)=2n-1$.

\item[2(iv)] Suppose that $\overline{\Sigma}(2p+2j+1)\equiv\overline{\Sigma}(2p+\alpha-3)\md{2(4p+3)n+1}$ for some
$j\in J$. Then by (\ref{star1}) and (\ref{star3a}), \begin{eqnarray*}
    (2p-6j-6)n-(p-j)+1&=&-(4p+1)n-1+L_n(2p+\alpha-3,0)\\
\Rightarrow \quad (6p-6j-5)n&=&p-j-2+L_n(2p+\alpha-3,0)\leq (p-2)+3n<4n
    \end{eqnarray*}
     and also by (\ref{RangeLc}) $n+1\leq p-j-2+L_n(2p+\alpha-3,0)$ so $1< 6p-6j-5<4$. Hence $6< 6(p-j)<9$, a contradiction.

\item[2(v)]  Suppose that $\overline{\Sigma}(2p+2j+2)\equiv \overline{\Sigma}(2p+\alpha-3)\md{2(4p+3)n+1}$ for some
$j\in J$. Then
\begin{eqnarray*}
    -(2p+2j+5)n-(p-j-1)&=&-(4p+1)n-1+L_n(2p+\alpha-3,0)\\
\Rightarrow \quad     (2p-2j-4)n&=&p-j-2+L(2p+\alpha-3,0)\leq (p-2)+3n<4n
      \end{eqnarray*}
      and also $n+1\leq p-j-2+L(2p+\alpha-3,0)$ so  $1< 2(p-j-2)<4$. Hence $1=p-j-2$ then $j=p-3$ which implies $L(2p+\alpha-3,0)=2n-1$.

\end{itemize}
Note that all parts of item 2 can be similarly verified for $\overline{\Sigma}(2p+\alpha-1)$. This proves Theorem \ref{4p+3}.
\end{proof}

Finally, to prove Theorem \ref{main2}, it remains to choose an $\alpha$ that satisfies the conditions of Theorem  \ref{4p+3}.
Let $n=4h+1$ and $\alpha=2h$; then gcd$(n,\alpha)=1$. Now since $n\geq 4p+3$, $h\geq p+1$ so
$2p+2\leq \alpha=2h\leq n-2h\leq n-2-2p$ so we are done.

\subsection{Globally simple $H(n;4p+3)$ when $n\equiv 0\md{4}$}

Finally it remains to prove Theorem \ref{main3}. Using Theorem \ref{Ladder2}, we can construct a suitable Heffter array $H(n;3)$ which merges with the support shifted Heffter array $H(n;4p,3)$ from Theorem \ref{main}, similarly to Theorem \ref{4p+3}.
In this process the diagonals of the Heffter array $H(n;3)$ become diagonals
$D_{2p+\alpha-\epsilon-1}$, $D_{2p+\alpha-1}$ and $D_{2p+\alpha+\epsilon-1}$ in the Heffter array $H(n;4p+3)$.
Then, so long as $2p+\alpha-\epsilon-1> 4p-2$ and  $2p+\alpha+\epsilon-1<n$, the partial sums will have all the same properties as in the $n\equiv 1\md{4}$ construction.
We thus have the following theorem.

\begin{theorem}
Let $n\equiv 0\md{4}$, $p\geq 0$ and $n\geq 4p+3$. Suppose $\epsilon\leq (n-4p)/2$ is coprime to $n$. If there exists an integer $\alpha$ coprime to $n$ such that $2p+\epsilon \leq \alpha\leq n-\epsilon-2p$, then there exists a globally simple Heffter array $H(n;4p+3)$.
\end{theorem}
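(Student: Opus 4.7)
The plan is to follow the template of Theorem \ref{4p+3} with Corollary \ref{Ladder2} in place of Theorem \ref{Ladder}. First I would invoke Theorem \ref{main} with the given $\alpha$ to produce a support shifted Heffter array $A=H(n;4p,3)$: since $\alpha$ is coprime to $n$ and $2p+\epsilon\le\alpha\le n-\epsilon-2p$ (with $\epsilon\ge 1$) forces $2p-1\le\alpha\le n-1-2p$, the hypothesis of Theorem \ref{main} holds. Next I would apply Corollary \ref{Ladder2} with the given $\epsilon$ and $\beta=2p+\alpha-\epsilon-1$ to obtain a Heffter array $L=H(n;3)$ with non-empty diagonals $D_\beta$, $D_{\beta+\epsilon}=D_{2p+\alpha-1}$ and $D_{\beta+2\epsilon}=D_{2p+\alpha+\epsilon-1}$; the hypothesis on $\alpha$ translates precisely to $0\le\beta\le n-2\epsilon-1$, as required by the corollary.

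I would then define the merged array $B$ by $B(i,j)=L(i,j)$ when $i-j\in\{\beta,\beta+\epsilon,\beta+2\epsilon\}$ and $B(i,j)=A(i,j)$ otherwise. The lower bound $\alpha\ge 2p+\epsilon$ pushes each of the three $L$-diagonals past $D_{4p-2}$, while $\alpha\le n-\epsilon-2p$ keeps $\beta+2\epsilon\le n-1$; combined with $\epsilon\neq 1$ (automatic since $n$ is even and $\gcd(n,\epsilon)=1$ force $\epsilon$ odd, and the construction requires $\epsilon\ge 3$) this guarantees that the $L$-diagonals are disjoint from the occupied diagonals $\{0,\ldots,4p-2\}\cup\{2p+\alpha\}$ of $A$. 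Support additivity $s(A)\cup s(L)=\{1,\ldots,(4p+3)n\}$ and zero row/column sums transfer immediately to $B$, making $B$ a Heffter array $H(n;4p+3)$. The partial sums of $B$ through diagonal $D_{4p-2}$ coincide with those of $A$ and so are distinct modulo $2(4p+3)n+1$ by Theorem \ref{main}. For the three $L$-diagonals, the sign pattern and magnitude range $\{1,\ldots,3n\}$ guaranteed by Corollary \ref{Ladder2}, together with $L$'s vanishing row and column sums, bracket each new partial sum cleanly between $\Sigma_A(4p-2)$ and $\Sigma_A(4p-3)$ (and analogously for non-zero columns via $\overline{\Sigma}_A$). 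The established orderings \eqref{Rowparsum4p} and \eqref{Colparsum4p} then show the new sums fit uniquely into the global ordering, handling rows and columns $a\neq 0$ exactly as in Theorem \ref{4p+3}.

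The main obstacle is column $0$, where the partial sums of $A$ already span nearly the entire interval $(-2(4p+3)n,2(4p+3)n)$ and so a priori can collide modulo $2(4p+3)n+1$ with the perturbed sums on the $L$-diagonals. The resolution copies the shift argument from Theorem \ref{4p+3}: the final bullet of Corollary \ref{Ladder2} permits replacing $L$ by $M(i,j)=L(i+1,j+1)$ without disturbing any invoked structural property, and this cyclically rotates the entries of $L$ along each of its three diagonals. A case analysis paralleling items 1(i)--1(iv) and 2(i)--2(v) of Theorem \ref{4p+3} isolates a bounded finite list of forbidden values for $L(\beta,0)$ and $-L(\beta+2\epsilon,0)$; any cyclic shift of $L$ avoiding this list completes the proof. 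Since the list has $O(1)$ entries while the shift orbit on each diagonal has length $n$, at least one valid shift exists for all sufficiently large $n$, which is exactly the input needed to deduce Theorem \ref{main3} once $\epsilon$ and $\alpha$ coprime to $n$ are selected in the prescribed windows by elementary number-theoretic arguments.
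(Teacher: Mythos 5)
Your proposal matches the paper's proof of this theorem: the paper likewise merges the $H(n;3)$ of Corollary \ref{Ladder2} (placed on diagonals $D_{2p+\alpha-\epsilon-1}$, $D_{2p+\alpha-1}$, $D_{2p+\alpha+\epsilon-1}$) with the support shifted $H(n;4p,3)$ of Theorem \ref{main}, and notes that the bounds $2p+\epsilon\leq\alpha\leq n-\epsilon-2p$ let the entire partial-sum analysis of Theorem \ref{4p+3}, including the column-zero shift argument, carry over unchanged. Your parenthetical claim that parity alone forces $\epsilon\neq 1$ is not quite right ($\epsilon=1$ is odd and coprime to $n$), but your fallback observation that the construction needs $\epsilon\geq 3$ to keep $D_{\beta+2\epsilon}$ off the occupied diagonal $D_{2p+\alpha}$ is correct, and is in fact a point the paper's own very terse proof passes over in silence.
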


For example, if $n\equiv 0\md{4}$ but $n\not\equiv0\md{12}$ and $n\geq 4p+8$, then choosing $\epsilon=3$ and
$\alpha=n/2-1$ yields a globally simple $H(n;4p+3)$.

The {\em Jacobsthal function} $j(n)$ is defined to be the smallest $m$ such that every sequence of $m$ consecutive integers contains an integer coprime to $n$. It was shown in \cite{Iw} that $j(n)=O(\log^2{n})$.
Thus for $n-4p$ sufficiently large we can choose $\epsilon=O(\log^2{n})$ and $\alpha=2p+O(\log^2{n})$ which are each coprime to $n$ and satisfy the inequalities of the above theorem. Thus Theorem \ref{main3} is true.

\section{Conclusion and Future Work}

As shown in \cite{ADDY} and \cite{DW},  an integer Heffter array $H(n;k)$ exists if and only if $nk\equiv 0,3 \md{4}$.
In this paper we have shown the existence of an integer Heffter array $H(n;k)$ which is globally simple whenever (a) $k\equiv 0\md{4}$; (b) $n\equiv 1\md{4}$ and $k\equiv 3\md{4}$; or (c) $n\equiv 0\md{4}$, $k\equiv 3\md{4}$ and $n\gg k$.
In future work we will show that in most cases (in particular when $n$ is prime), the array $H(n;4p+3)$ given in Section 4 has an ordering which is both simple and compatible.
As discussed in the introduction, this will yield biembeddings of cycle systems on orientable surfaces. We will also give lower bounds on the number of such non-isomorphic biembeddings.

\vspace{5mm}
\noindent{\bf Acknowledgment:} The fourth author would like to acknowledge support from
TUBITAK 2219 and the School of Mathematics and Physics, The University of Queensland, through the awarding of a Ethel Raybould Visiting Fellowship.


\begin{appendix}

{\Large\bf Appendix}

\begin{center}
Row partial sums for Example \ref{appie}. \\
\begin{scriptsize}
\begin{tabular}{|c||c|c|c|c|c|c|c|c|c|c|c|c|}
\hline
&\multicolumn{12}{c|}{Partial sum to diagonal}\\
\hline
Row Number&$D_0$&$D_1$&$D_2$&$D_3$&$D_4$&$D_5$&$D_6$&$D_7$&$D_8$&$D_9$&$D_{10}$&$D_{12}$\\
                           \hline
                           \hline
                           0 &85   &1    &150  &2    &215  &3    &-250 &-82  &-251 &-147 &-252    &0\\
                           \hline
                           1 &53   &1    &152  &2    &217  &3    &-222 &-56  &-223 &-121 &-224    &0\\
                           \hline
                           2 &55   &1    &154  &2    &219  &3    &-228 &-64  &-229 &-129 &-230    &0\\
                           \hline
                           3 &57   &1    &122  &2    &221  &3    &-234 &-72  &-235 &-137 &-236    &0\\
                           \hline
                           4 &59   &1    &124  &2    &223  &3    &-240 &-80  &-241 &-145 &-242    &0\\
                           \hline
                           5 &61   &1    &126  &2    &191  &3    &-246 &-88  &-247 &-153 &-248    &0\\
                           \hline
                           6 &63   &1    &128  &2    &193  &3    &-252 &-96  &-253 &-161 &-254    &0\\
                           \hline
                           7 &65   &1    &130  &2    &195  &3    &-224 &-70  &-225 &-135 &-226    &0\\
                           \hline
                           8 &67   &1    &132  &2    &197  &3    &-230 &-44  &-231 &-143 &-232    &0\\
                           \hline
                           9 &69   &1    &134  &2    &199  &3    &-236 &-52  &-237 &-151 &-238    &0\\
                           \hline
                          10 &71   &1    &136  &2    &201  &3    &-242 &-60  &-243 &-125 &-244    &0\\
                           \hline
                          11 &73   &1    &138  &2    &203  &3    &-248 &-68  &-249 &-133 &-250    &0\\
                           \hline
                          12 &75   &1    &140  &2    &205  &3    &-220 &-42  &-221 &-107 &-222    &0\\
                           \hline
                          13 &77   &1    &142  &2    &207  &3    &-226 &-50  &-227 &-115 &-228    &0\\
                           \hline
                          14 &79   &1    &144  &2    &209  &3    &-232 &-58  &-233 &-123 &-234    &0\\
                           \hline
                          15 &81   &1    &146  &2    &211  &3    &-238 &-66  &-239 &-131 &-240    &0\\
                           \hline
                          16 &83   &1    &148  &2    &213  &3    &-244 &-74  &-245 &-139 &-246    &0\\
                           \hline
                           \end{tabular}
                           \end{scriptsize}
                           \end{center}

  \begin{center}
Column partial sums for Example \ref{appie}. \\
\begin{scriptsize}
\begin{tabular}{|c||c|c|c|c|c|c|c|c|c|c|c|c|}
\hline
&\multicolumn{12}{c|}{Partial sum to diagonal}\\
\hline
Column Number&$D_0$&$D_1$&$D_2$&$D_3$&$D_4$&$D_5$&$D_6$&$D_7$&$D_8$&$D_9$&$D_{10}$&$D_{12}$\\
                           \hline
                           \hline
0 &85&33&186&66&287&99&-156&-2&-189&-103&-222&0\\
                           \hline
1 &53&-1&120&-2&187&-3&-230&-44&-229&-111&-228&0\\
                                                     \hline
2 &55&-1&122&-2&189&-3&-236&-52&-235&-119&-234&0\\
                                                     \hline
3 &57&-1&124&-2&191&-3&-242&-60&-241&-127&-240&0\\
                                                     \hline
4 &59&-1&126&-2&193&-3&-248&-68&-247&-135&-246&0\\
                                                     \hline
5 &61&-1&128&-2&195&-3&-254&-76&-253&-143&-252&0\\
                                                     \hline
6 &63&-1&130&-2&197&-3&-226&-50&-225&-117&-224&0\\
                                                     \hline
7 &65&-1&132&-2&199&-3&-232&-58&-231&-125&-230&0\\
                                                     \hline
7 &67&-1&134&-2&201&-3&-238&-66&-237&-133&-236&0\\
                                                     \hline
9 &69&-1&136&-2&203&-3&-244&-74&-243&-141&-242&0\\
                                                     \hline
10&71&-1&138&-2&205&-3&-250&-82&-249&-149&-248&0\\
                                                     \hline
11&73&-1&140&-2&207&-3&\em{-256}&-90&-255&-157&-254&0\\
                                                     \hline
12&75&-1&142&-2&209&-3&-228&-64&-227&-131&-226&0\\
                                                     \hline
13&77&-1&144&-2&211&-3&-234&-72&-233&-139&-232&0\\
                                                     \hline
14&79&-1&146&-2&213&-3&-240&-80&-239&-147&-238&0\\
                                                     \hline
15&81&-1&148&-2&215&-3&-246&-88&-245&-155&-244&0\\
                                                     \hline
16&83&-1&150&-2&217&-3&-252&-96&-251&-163&-250&0\\
                           \hline
                           \end{tabular}

                         \end{scriptsize}
                            \end{center}

\end{appendix}

 \end{document}